\providecommand\@dotsep{5}
\def\listtodoname{List of Todos}
\def\listoftodos{\@starttoc{tdo}\listtodoname}
\numberwithin{equation}{section}
\newtheorem{theorem}{Theorem}[section]
\newtheorem{prop}[theorem]{Proposition}
\newtheorem{lem}[theorem]{Lemma}
\newtheorem{rem}{Remark}
\newtheorem{definition}{Definition}
\newcommand\restr[2]{{
		\left.\kern-\nulldelimiterspace 
		#1 
		\vphantom{\big|} 
		\right|_{#2} 
}}
\title[Mixed local and non-local critical problem]
{Multiplicity result 
for mixed local and nonlocal Kirchhoff  problem  involving critical growth
}
\author[Vinayak Mani Tripathi]{V. M. Tripathi}
\address[Vinayak Mani Tripathi]{\newline\indent
		Department of Mathematics
		\newline\indent 
		Indian Institute of Technology Bhilai
		\newline\indent
		491002, Durg, Chhattisgarh, India}
\email{\href{mailto:vinayakm@iitbhilai.ac.in}{vinayakm@iitbhilai.ac.in}}
\subjclass[2010]{Primary  
	35A01, 
	35A15, 
	35B33, 35R11
}
\keywords{Mixed local and nonlocal operators; Kirchhoff type problem; Critical nonlinearity; Nehari manifold}
\begin{document}

	\begin{abstract}
		In this paper, we study the  multiplicity of nonnegative solutions for mixed local and non-local problem involving critical nonlinearity with sign changing weight. Using Nehari manifold method and fibering map analysis, we have shown existence of  two solutions.
	\end{abstract}
	
	\maketitle

	\section{Introduction}
	In this work we study the multiplicity of nonnegative solutions for the following mixed local and non-local Kirchhoff problem
	\begin{equation}\label{p}	\tag{$P_{\lambda}$}
		\left\{
		\begin{aligned}
			M\left(\int_{\mathbb{R}^N}|\nabla u|^2dx+\int\int_{\mathbb{R}^{2N}}\frac{|u(x)-u(y)|^2}{|x-y|^{N+2s}}dxdy\right)\mathcal{L}(u)&= \lambda {f(x)}|u|^{p-2}u+|u|^{2^*-2}u \hspace{0.9cm} \mbox{in}\ \ \Omega, \\
			u&=0                                  \hspace{3.8cm}\mbox{on}\ \ \mathbb R^N\setminus \Omega,
		\end{aligned}
		\right.
	\end{equation}where, $\Omega\subset\mathbb{R}^N$ is bounded domain with smooth boundary, $1<p<2<2^*=\frac{2N}{N-2}, N\geq 3,\lambda>0$ is a positive parameter and $\mathcal{L}$ denotes the mixed local and nonlocal operator
	\[\mathcal{L}(u):=-\Delta u+ (-\Delta)^su,  \ s\in (0,1).\] The weight  function $f\in L^\frac{2^*}{2^*-p}(\Omega)$  is allowed to change  sign.
	The fractional Laplacian operator $(-\Delta)^s $ is defined, up to a normalization constant, as
	\[(-\Delta)^s\phi(x)=\int_{\mathbb{R}^{N}}\frac{2\phi(x)-\phi(x+y)-\phi(x-y)}{|y|^{N+2s}}dy, \ \ x\in \mathbb{R}^N\] for any $\phi\in C^\infty_0(\Omega)$ (see, \cite{hguide} for more details). The map $M:[0,\infty)\to [0,\infty)$ is continuous function defined by
\begin{equation}\label{mt}
	M(t)=a+bt^{\theta-1},
\end{equation} where $\theta\in[1,\frac{2^*}{2})$, $a> 0$ and $b>0$.
	
	The above class of problems can be seen as stationary state of the following problem
	\begin{equation*}
		\left\{
		\begin{aligned}
			u_{tt}-M\left(\int_{ \Omega}|\nabla u|^2dx\right)\Delta u&= F(x,u) &&\mbox{in}\ \ \Omega, \\
			u&=0                                   &&\mbox{on}\ \ \partial\Omega,
		\end{aligned}
		\right.
	\end{equation*}
	which was initially introduced by Kirchhoff (see \cite{kirchhoff}) to address free transversal oscillations of elastic strings. The term $M$ measure the stress in the string resulting from change in its length during vibration. It is directly proportional to the Sobolev norm of the string displacement. Nonlocal diffusion problems have received a lot of attention in recent past, especially those that are driven by the fractional Laplace operator. A potential reason for  this is that this operator naturally arises in a number of physical phenomena including population dynamics, geophysical fluid dynamics, flame propagation and liquid chemical reactions. Furthermore, in probability theory, it provides a basic model to explain specific jump Lévy processes (see, \cite{prob1, valdi,twod}). The study of a fractional Kirchhoff model arising from the analysis of string vibrations has been discussed in \cite{fiscella}. 
	
	 A lot of attention for local and non-local elliptic problems  has been given to study   existence and multiplicity of solutions dealing with nonlinearity, as involved in the problem \eqref{p}. After the pioneering work of Ambrosetti et al. in\cite{abc}, many researchers have shown their interest to the particular class of nonlinearity that is the subject of this study, known as concave-convex nonlinearity. For $p-$Laplacian and fractional operator  we refer interested readers to see the work in \cite{Peral,azorero,barios} and the references therein,  where the authors have studied such class of problems. There are numerous papers dealing with Kirchhoff type nonlocal problems involving fractional Laplacian.   With no attempt to provide the complete list of references, we cite \cite{bar1,pucci,b2,caff,fiscella,pucci1,servd,servd2} and  references therein for problems involving fractional  Laplacian and fractional Kirchhoff operator. For recent developments in nonlocal Kirchhoff problems, we suggest interested readers to see the survey \cite{puccird}. In \cite{tfwu,mana} authors have consider semilinear and fractional Kirchhoff problem with concave-convex critical nonlinearity   involving sign changing weight. Using Nehari manifold idea (see,\cite{n1,n2}) authors proved the existence of at least two positive solutions for suitable choice of  parameter $\lambda$.
	
	The mixed operator have been the subject of the study in the recent past. This naturally arises from the superposition of two different-scale stochastic processes: a Lévy flight and a classical random walk.  For instance, in relation to the study of the best foraging techniques and the spread of biological species \cite{valdi1}, the species survival problem \cite{valdinoci}, and  so on. Inspired from interesting application of mixed local and local and nonlocal problems the following class of problem is addressed in the literature for analysis of several qualitative properties of solutions
	\[-\Delta_qu+(-\Delta)^s_qu=f, \ \ \textrm{in} \ \Omega\]    where  $s\in(0,1),q\in (1,\infty), -\Delta_q$ and $(-\Delta)^s_q$ denotes $q-$Laplacian and fractional $q-$Laplacian respectively. In case  $q=2$, we refer interested readers to see the works in \cite{cozzi,barles,biagi2} wherein the authors have shown the existence of weak solutions, the strong maximum principle, local boundedness, interior Sobolev, and Lipschitz regularity.   By using variational techniques, authors in \cite{mugnai} exhibited the existence of a weak solution when  $f$ has  at most linear growth. Boundedness and strong maximum principle for the inhomogenous case have been proved in \cite{biagi3}. In the case $q\in (1,\infty)$ and $f=0$,  the local boundedness of weak subsolutions, local Hölder continuity of weak solutions, Harnack inequality for weak solutions and weak Harnack inequality for weak supersolutions has been established in \cite{garain1}. Additionally, we suggest interested readers to view the results in \cite{biagi4,biagi5,garain2,d2,fiscmixed}  and the   references therein for problems related with  mixed local and nonlocal	operators.
	
{	In this paper we aim to show multiplicity result for problem \eqref{p}. The major challenge here is to show existence of second solution for \eqref{p} as the optimal constant in mixed type Sobolev inequality is not achieved (see, \cite{biagi1}). By using Talenti functions, Brezis-Lieb result, and estimates from \cite{biagi1}, we have ensured the existence of second solution under some restriction on dimension and on the fractional parameter. To the best of our knowledge this is the first attempt to study concave-convex class of problem for mixed local and nonlocal operators using Nehari manifold idea. }
	
	This work is organized into the following sections. In section \ref{prem}, we recall all the relevant notation, definitions and preliminary results used throughout this work. In section \ref{vf} we have provided the framework of the Nehari  setup and some technical results. We have shown the existence of minimizers for the energy functional in sections \ref{sec4} and \ref{sec5}.  
	We conclude this section by stating the main results of our work.
	\begin{theorem}\label{t1}
		There exists $\Lambda_0>0$ such that the problem \eqref{p} has a nontrival nonnegative  solution  for all $\lambda\in(0,\Lambda_0)$.	\end{theorem}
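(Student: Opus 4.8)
The plan is to obtain solutions of \eqref{p} as critical points of the energy functional
\[
I_\lambda(u)=\frac{a}{2}\|u\|^2+\frac{b}{2\theta}\|u\|^{2\theta}-\frac{\lambda}{p}\int_\Omega f|u|^p\,dx-\frac{1}{2^*}\int_\Omega|u|^{2^*}\,dx,
\]
defined on the natural solution space $X$ of functions in the mixed Sobolev space that vanish on $\mathbb{R}^N\setminus\Omega$, equipped with $\|u\|^2=\int_{\mathbb{R}^N}|\nabla u|^2\,dx+\int\int_{\mathbb{R}^{2N}}\frac{|u(x)-u(y)|^2}{|x-y|^{N+2s}}\,dx\,dy$; here $\tfrac{a}{2}\|u\|^2+\tfrac{b}{2\theta}\|u\|^{2\theta}=\tfrac12\widehat M(\|u\|^2)$ with $\widehat M$ the primitive of $M$. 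First I would introduce the Nehari manifold $\mathcal N_\lambda=\{u\in X\setminus\{0\}:\langle I_\lambda'(u),u\rangle=0\}$ and the fibering maps $\phi_u(t)=I_\lambda(tu)$, and split $\mathcal N_\lambda=\mathcal N_\lambda^+\cup\mathcal N_\lambda^-\cup\mathcal N_\lambda^0$ according to the sign of $\phi_u''(1)$, so that $\mathcal N_\lambda^+$ collects the local minima of the fibering maps.

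Next I would check that $I_\lambda$ is coercive and bounded below on $\mathcal N_\lambda$: eliminating $\int_\Omega|u|^{2^*}$ via the Nehari constraint gives $I_\lambda(u)=a\big(\tfrac12-\tfrac1{2^*}\big)\|u\|^2+b\big(\tfrac1{2\theta}-\tfrac1{2^*}\big)\|u\|^{2\theta}-\lambda\big(\tfrac1p-\tfrac1{2^*}\big)\int_\Omega f|u|^p$, where $2\theta<2^*$ makes every leading coefficient positive, while Hölder and the Sobolev embedding bound $\int_\Omega f|u|^p\le C\|f\|_{L^{2^*/(2^*-p)}}\|u\|^p$ with $p<2$, yielding coercivity. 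The crucial structural step is to show $\mathcal N_\lambda^0=\emptyset$ for small $\lambda$: for $u\in\mathcal N_\lambda^0$ the two equations $\phi_u'(1)=\phi_u''(1)=0$ combine into
\[
a(2^*-2)\|u\|^2+b(2^*-2\theta)\|u\|^{2\theta}=\lambda(2^*-p)\int_\Omega f|u|^p,
\]
\[
(2^*-p)\int_\Omega|u|^{2^*}=a(2-p)\|u\|^2+b(2\theta-p)\|u\|^{2\theta}.
\]
The first forces $\int_\Omega f|u|^p>0$ and, with the embedding estimate, the upper bound $\|u\|\le(\lambda K)^{1/(2-p)}$; the second, combined with $\int_\Omega|u|^{2^*}\le C\|u\|^{2^*}$, forces a lower bound $\|u\|\ge c_0>0$ that is independent of $\lambda$. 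These are incompatible for $\lambda$ small, which defines $\Lambda_0$ and gives the disjoint splitting $\mathcal N_\lambda=\mathcal N_\lambda^+\cup\mathcal N_\lambda^-$ with the two pieces separated.

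I would then minimize $I_\lambda$ over $\mathcal N_\lambda^+$. This set is nonempty because $f$ changes sign: choosing $u$ with $\int_\Omega f|u|^p>0$, the fibering analysis produces a unique $t^+(u)>0$ with $t^+u\in\mathcal N_\lambda^+$ whenever $\lambda<\Lambda_0$; moreover on $\mathcal N_\lambda^+$ one has $\int_\Omega f|u|^p>0$ and $\phi_u(t^+)<0$, so $c^+:=\inf_{\mathcal N_\lambda^+}I_\lambda<0$. Since $\mathcal N_\lambda^0=\emptyset$, the constraint derivative is bounded away from zero, so Ekeland's variational principle gives a (free) Palais--Smale sequence $u_n$ at level $c^+$. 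Coercivity makes $\{u_n\}$ bounded; along a subsequence $u_n\rightharpoonup u_1$ in $X$, with strong $L^p$ and a.e.\ convergence and $\int_\Omega f|u_n|^p\to\int_\Omega f|u_1|^p$ by the compact subcritical embedding. Because $c^+<0$ lies strictly below the first positive level at which a Palais--Smale sequence could bubble, the Palais--Smale condition holds at $c^+$, so $u_n\to u_1$ strongly; hence $u_1\in\mathcal N_\lambda^+$, $I_\lambda(u_1)=c^+<0$ (so $u_1\neq0$), and $I_\lambda'(u_1)=0$. Replacing $u_n$ by $|u_n|$, which does not increase $\|\cdot\|$ and leaves the lower-order integrals unchanged, lets me take $u_1\ge0$, producing the desired nontrivial nonnegative solution.

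The main obstacle is the failure of compactness caused by the critical exponent $2^*$, since $X\hookrightarrow L^{2^*}$ is not compact and the optimal mixed Sobolev constant is not attained (cf.\ \cite{biagi1}). For this first solution, however, the obstruction is bypassed by the negativity of $c^+$: the minimizing level sits below the first noncompactness threshold, so strong convergence follows without the fine Talenti/Brezis--Lieb estimates. The only points needing care are verifying that this threshold is strictly positive and that $u_1\neq0$, both of which follow from the uniform lower bound $\|u\|\ge c_0$ on the relevant part of $\mathcal N_\lambda$ together with $c^+<0$. The genuinely delicate critical analysis—and the attendant restrictions on $N$ and $s$—is reserved for the second solution on $\mathcal N_\lambda^-$, whose minimax level is positive and therefore interacts with the concentration level.
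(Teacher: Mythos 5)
Your overall strategy coincides with the paper's: the same energy functional, the same Nehari/fibering decomposition with $\mathcal{N}^0_\lambda=\emptyset$ for small $\lambda$ (your two-equation incompatibility argument on $\mathcal{N}^0_\lambda$ is an equivalent, slightly more direct version of the paper's lower estimate $\lambda^*\geq\Lambda_1$ via the map $\tilde{\Phi}_u$), coercivity on $\mathcal{N}_\lambda$, negativity of the infimum on $\mathcal{N}^+_\lambda$, Ekeland's principle to produce a Palais--Smale sequence, and strong convergence because the level $c^+<0$ lies below the threshold $c_\lambda$ of Proposition \ref{cpt}, which is nonnegative once $\lambda\leq\Lambda_2$ --- exactly how the paper assembles $\Lambda_0=\min\{\Lambda_1,\Lambda_2\}$. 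Up to the last step, the proposal is sound.

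The genuine gap is the final step ``replacing $u_n$ by $|u_n|$.'' In $\mathcal{X}^{1,2}(\Omega)$ this is not harmless: although $|\nabla|u||=|\nabla u|$ a.e., the Gagliardo seminorm satisfies only $[\,|u|\,]_s\leq[u]_s$, in general with strict inequality, so $\rho(|u|)\neq\rho(u)$ and $J_\lambda(|u|)\neq J_\lambda(u)$. Consequently $|u_n|$ in general leaves $\mathcal{N}_\lambda$ (the Nehari constraint is an equality, not an inequality), the Ekeland properties $J_\lambda(u_n)\to c^+$ and $J'_\lambda(u_n)\to 0$ are not inherited by $|u_n|$, and even in the limit $|u_1|$ need not be a critical point of $J_\lambda$ although $u_1$ is; re-projecting $|u_n|$ onto the manifold via $t^{+}(|u_n|)$ does not obviously preserve the level either, since the fibering map of $|u_n|$ differs from that of $u_n$ in its $\rho^2$ and $\rho^{2\theta}$ terms. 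The paper flags precisely this obstruction and circumvents it by a different device: it works with the positive-part functional $J^+_\lambda(u)=\frac{1}{2}\hat{M}(\rho(u)^2)-\frac{\lambda}{p}\int_\Omega f(u^+)^p\,dx-\frac{1}{2^*}\int_\Omega (u^+)^{2^*}\,dx$, so that the minimizer $u_0$ satisfies the Euler equation with right-hand side $(u_0^+)^{p-1}$, $(u_0^+)^{2^*-1}$, and then tests with $\phi=u_0^-$, using the pointwise inequality $(u_0(x)-u_0(y))(u_0^-(x)-u_0^-(y))\leq -|u_0^-(x)-u_0^-(y)|^2$ to conclude $\rho(u_0^-)=0$, i.e.\ $u_0\geq 0$; you should replace your symmetrization step by this argument. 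A minor further remark: the convergence $\int_\Omega f|u_n|^p\,dx\to\int_\Omega f|u_1|^p\,dx$ follows from $|u_n|^p\rightharpoonup|u_1|^p$ weakly in $L^{2^*/p}(\Omega)$ (boundedness plus a.e.\ convergence) paired against the fixed element $f\in L^{2^*/(2^*-p)}(\Omega)$, rather than from a compact embedding, since the exponent pairing sits exactly at the critical level; the conclusion itself is correct.
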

		\begin{theorem}\label{t2} Let $N+4s<6$, then there 
			 exists $0<\Lambda_{00}\leq \Lambda_0$, such that  the problem \eqref{p} has at least two  nontrival nonnegative  solutions  for all $\lambda\in (0,\Lambda_{00})$ and for sufficiently small values of $b$ in \eqref{mt}. 
		\end{theorem}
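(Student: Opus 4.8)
The plan is to produce the second solution as a minimizer of the energy functional $I_\lambda$ over the component $\mathcal{N}_\lambda^-$ of the Nehari manifold, using the fibering-map decomposition $\mathcal{N}_\lambda=\mathcal{N}_\lambda^+\cup\mathcal{N}_\lambda^0\cup\mathcal{N}_\lambda^-$ from Section~\ref{vf}. The solution of Theorem~\ref{t1} is the minimizer $u_1$ of $I_\lambda$ on $\mathcal{N}_\lambda^+$ with $\alpha_\lambda^+:=\inf_{\mathcal{N}_\lambda^+}I_\lambda<0$, and for $\lambda$ small we may assume $\mathcal{N}_\lambda^0=\{0\}$, so that $\mathcal{N}_\lambda^-$ is a $C^1$ manifold bounded away from the origin and constrained critical points of $I_\lambda|_{\mathcal{N}_\lambda^-}$ are free critical points. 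First I would check that $\alpha_\lambda^-:=\inf_{\mathcal{N}_\lambda^-}I_\lambda$ is finite, take a minimizing sequence $(u_n)\subset\mathcal{N}_\lambda^-$, and observe that it is bounded in the mixed Sobolev space because on $\mathcal{N}_\lambda$ the superlinear Kirchhoff and critical terms control the norm. Replacing $u_n$ by $|u_n|$ does not increase the mixed norm (since $|\nabla|u_n||=|\nabla u_n|$ a.e.\ and the Gagliardo seminorm satisfies $[\,|u_n|\,]_s\le[u_n]_s$) nor the energy, so we may take $u_n\ge0$; via Ekeland's principle applied on $\mathcal{N}_\lambda^-$ together with $\mathcal{N}_\lambda^0=\{0\}$, the sequence is a Palais--Smale sequence for $I_\lambda$ at level $\alpha_\lambda^-$.

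The second step is to recover compactness of this sequence. Writing $u_n\rightharpoonup u_2$ and $v_n:=u_n-u_2\rightharpoonup0$, I would apply the Brezis--Lieb lemma to split both the critical term $\int_\Omega|u_n|^{2^*}$ and the mixed norm $\|u_n\|^2$. The obstruction specific to the Kirchhoff setting is that $M$ is nonlinear, so $\|u_n\|^{2\theta}$ does not split additively; I would therefore record the concentrated mass $\ell:=\lim\|v_n\|^2$ and expand $(\|u_2\|^2+\ell)^{\theta}$ to obtain a quantized energy-loss identity. Since the best constant in the mixed Sobolev inequality equals the classical one $S$ and is not attained (see~\cite{biagi1}), comparing this identity with the Sobolev inequality shows that each unit of concentrated mass costs at least the ground-state level of the limiting critical Kirchhoff problem, and yields a compactness threshold $c^\ast$ which, as $b\to0$, tends to $\tfrac1N(aS)^{N/2}$. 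Consequently $(PS)_c$ holds for every $c<\alpha_\lambda^++c^\ast$, so below this level $u_n\to u_2$ strongly and $u_2$ is a second nontrivial nonnegative critical point; it is distinct from $u_1$ because it lies on $\mathcal{N}_\lambda^-$, which is disjoint from $\mathcal{N}_\lambda^+$.

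The main obstacle, and the origin of the hypotheses $N+4s<6$ and $b$ small, is the strict energy estimate $\alpha_\lambda^-<\alpha_\lambda^++c^\ast$. Because the mixed Sobolev constant is not achieved I cannot use genuine extremals, so I would insert suitably truncated Talenti functions $u_\epsilon$ (the extremals for the local operator $-\Delta$) and estimate $\sup_{t\ge0}I_\lambda(u_1+t\,u_\epsilon)$ with the sharp asymptotics of~\cite{biagi1}, choosing $t^\ast$ so that $u_1+t^\ast u_\epsilon\in\mathcal{N}_\lambda^-$. The local gradient and critical terms reproduce $\alpha_\lambda^++\tfrac1N(aS)^{N/2}$, while the interaction between $u_1$ and the concentrating bubble contributes a gain of order $\epsilon^{(N-2)/2}$ through the cross terms $\int\nabla u_1\cdot\nabla u_\epsilon$ and $\int u_1^{2^*-1}u_\epsilon$. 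The extra nonlocal Gagliardo seminorm raises the energy by a term of order $[u_\epsilon]_s^2=O(\epsilon^{2-2s})$, and for this loss to be negligible against the cross-term gain one needs $\epsilon^{2-2s}=o(\epsilon^{(N-2)/2})$, i.e.\ $2-2s>\tfrac{N-2}{2}$, which is exactly $N+4s<6$; taking $b$ small simultaneously keeps the Kirchhoff perturbation of the energy below the gain and keeps $c^\ast$ close to $\tfrac1N(aS)^{N/2}$. Once $\alpha_\lambda^-<\alpha_\lambda^++c^\ast$ is secured for $\lambda\in(0,\Lambda_{00})$ with some $\Lambda_{00}\le\Lambda_0$, the compactness of the second step produces the minimizer, and the strong maximum principle for the mixed operator upgrades $u_2$ to a genuine nonnegative solution of~\eqref{p} distinct from the first.
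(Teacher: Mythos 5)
Your overall strategy coincides with the paper's: minimize $J_\lambda$ over $\mathcal{N}^-_\lambda$, extract a Palais--Smale sequence via Ekeland (Lemma \ref{pscseq}), recover compactness below a Brezis--Nirenberg-type threshold built on the non-attained mixed constant $S_{N,s}(\Omega)=S_N$ (Proposition \ref{cpt}), and secure the strict energy inequality by perturbing $u_0$ with truncated Talenti bubbles (the paper formalizes your ``choose $t^*$ so that $u_0+t^*u_\epsilon\in\mathcal{N}^-_\lambda$'' through a path between the sets $U_1$ and $U_2$). Your balance condition $\epsilon^{2-2s}=o(\epsilon^{(N-2)/2})$, i.e.\ $N+4s<6$, is exactly the paper's requirement $k_{s,N}=\min\{N-2,2-2s\}>\frac{N-2}{2}$ in Proposition \ref{neglevl}, and the smallness of $b$ enters for the same reason, though not quite as you describe: the paper's threshold $c_\lambda$ contains a negative correction proportional to $b^{-p/(2\theta-p)}$, so it \emph{deteriorates} as $b\to0$ for fixed $\lambda$; the paper compensates by coupling $b=\epsilon^q$ with $q>N-2$ and shrinking $\lambda$ below a constant $\Lambda_3$ depending on these choices, rather than by your heuristic that the compactness level tends to $\frac{1}{N}(aS_N)^{N/2}$ as $b\to0$.

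The genuine gap is the step ``replacing $u_n$ by $|u_n|$ \dots\ so we may take $u_n\ge0$.'' The inequalities $[\,|u|\,]_s\le[u]_s$ and hence $J_\lambda(|u|)\le J_\lambda(u)$ are correct, but the replacement destroys both structures your argument depends on. Since $\rho(|u_n|)<\rho(u_n)$ whenever $u_n$ changes sign, while $\int_\Omega f|u_n|^p$ and $\int_\Omega|u_n|^{2^*}$ are unchanged, $|u_n|$ no longer satisfies the Nehari identity and so exits $\mathcal{N}^-_\lambda$; one would have to reproject via $t^-(|u_n|)$ and then justify $J_\lambda\bigl(t^-(|u_n|)\,|u_n|\bigr)\le J_\lambda(u_n)$, which requires $t^-(|u_n|)$ to land where the fibering map $m_{\lambda,u_n}$ does not exceed its value at $t=1$ --- not automatic. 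Worse, the Ekeland-derived information $J'_\lambda(u_n)\to0$ is not inherited by $|u_n|$, so the compactness machinery of Proposition \ref{cpt} cannot be applied to the modified sequence. This is precisely the obstruction the paper flags explicitly ($\rho(u)\neq\rho(|u|)$ in $\mathcal{X}^{1,2}(\Omega)$, hence $J_\lambda(u)\neq J_\lambda(|u|)$) and resolves differently: it keeps the original sequence, obtains the minimizer $u_1\in\mathcal{N}^-_\lambda$ using the closedness of $\mathcal{N}^-_\lambda$ (Lemma \ref{close}), and proves nonnegativity \emph{a posteriori} through the positive-part functional $J^+_\lambda$, testing the Euler--Lagrange equation with $u^-$ and using the pointwise inequality $(u(x)-u(y))(u^-(x)-u^-(y))\le-|u^-(x)-u^-(y)|^2$ to conclude $\rho(u^-)=0$. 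Your closing appeal to the strong maximum principle is also misplaced for this purpose: it would upgrade nonnegativity to positivity, but it cannot produce the nonnegativity itself.
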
 
		\begin{rem}As $N+4s<6$ in Theorem  \ref{t2} we can emphasize that the results hold good, only in dimension 3, 4 and 5 under certain restriction on the fractional exponent.
		\end{rem}
			\section{Preliminaries}\label{prem} Let   $s\in(0,1)$. If $u:\mathbb{R}^N\to \mathbb{R}$ is a measurable function, we define 
	\[[u]_s:=\left(\int\int_{\mathbb{R}^{2N}}\frac{|u(x)-u(y)|^2}{|x-y|^{N+2s}}dxdy\right)^\frac{1}{2},\] the so called Gagliardo semi norm of $u$ of order $s$. Let $\Omega\subset\mathbb{R}^N$ is an open set (with Lipschitz boundary), not necessarily bounded. We denote $\mathcal{X}^{1,2}(\Omega)$ as the completion of $C^\infty_0(\Omega)$ with respect to the following global norm 
	\[\rho(u):=\left(\|\nabla u\|^2_{L^2(\mathbb{R}^N)}+[u]^2_s\right)^\frac{1}{2}, \ \ \ u\in C^\infty_0(\Omega).\] 
	\begin{rem}
		A few remarks are in order concerning the space $\mathcal{X}^{1,2}(\Omega)$.
		\begin{enumerate}
			\item The norm $\rho(u)$ is induced by the scalar product 
			\[<u,v>_\rho:=\left(\int_{\mathbb{R}^N}\nabla u\cdot\nabla vdx+\int\int_{\mathbb{R}^{2N}}\frac{(u(x)-u(y))(v(x)-v(y))}{|x-y|^{N+2s}} dxdy\right),\] where $\cdot$ denotes the usual scalar product on the Euclidean space $\mathbb{R}^N$ and $\mathcal{X}^{1,2}(\Omega)$ is a Hilbert space. 
			\item Despite of $u\in C^\infty_0(\Omega)$ the $L^2-$norm of $\nabla u$  is considered  on the  whole space. This is to emphasize that elements in $\mathcal{X}^{1,2}(\Omega)$ are functions defined on the entire space $\mathbb{R}^N$ and not only in $\Omega$. The benefit of having this global functional setting is that these functions can be globally approximated on $\mathbb{R}^N$ (with respect to the norm $\rho(\cdot)$) by smooth functions with compact support in $\Omega$. 
			
			In particular when $\Omega\neq \mathbb{R}^N$, we can see that this global definition of $\rho(\cdot)$, implies that the functions in $\mathcal{X}^{1,2}(\Omega)$ naturally satisfy the nonlocal Dirichlet condition specified in problem \eqref{p}, that is, 
			\begin{equation}\label{space}
				u\equiv0 \ \ \textrm{a.e.} \ \textrm{in} \ \mathbb{R}^N\setminus\Omega \ \textrm{for} \ \textrm{every} \ u\in \mathcal{X}^{1,2}(\Omega).
			\end{equation}{For detail understanding  of nature of the space $\mathcal{X}^{1,2}(\Omega)$ and the validity of  \eqref{space}, we refer interested readers to  see the remark 2.1 in \cite{biagi1,biagi5}.}
			\item The embedding  $\mathcal{X}^{1,2}(\Omega)\hookrightarrow L^r(\Omega)$ is compact  for every $r\in [1,2^*)$.
		\end{enumerate} 
	\end{rem}
	The meaning of solution of the underline problem is stated as follows. 
	
		\begin{definition}\label{def}
		A function $u\in \mathcal{X}^{1,2}(\Omega)$ is said to be a (weak) solution of the problem \eqref{p} if 
		\[M(\rho( u)^2)<u,\phi>_\rho=\lambda \int_\Omega f|u|^{p-2}u\phi dx+\int_\Omega |u|^{2^*-2}u\phi dx\] for all $\phi\in \mathcal{X}^{1,2}(\Omega)$.
	\end{definition}
	
	To study the problem \eqref{p} via variatonal methods, we define the associated energy functional $J_\lambda:\mathcal{X}^{1,2}(\Omega)\to \mathbb{R}$ by
	\begin{align*}
	J_\lambda(u)&=\frac{1}{2}\hat{M}(\rho(u)^2)-\lambda\frac{1}{p}\int_\Omega f |u|^p dx-\frac{1}{2^*}\int_\Omega |u|^{2^*}dx\\
	&=\frac{a}{2} \rho(u)^2+\frac{b}{2\theta}\rho(u)^{2\theta}-\lambda\frac{1}{p}\int_\Omega f |u|^p dx-\frac{1}{2^*}\int_\Omega |u|^{2^*}dx
	\end{align*}
where $\hat{M}(t)=\int_{0}^{t}M(s)ds=a t+\frac{b}{\theta}t^{\theta}$.  One can see that $J_\lambda$ is $C^1$ and in light of definition \ref{def} the critical points of $J_\lambda$ corresponds to the solutions of \eqref{p}.
 {Let us recall, for any $\Omega\subseteq\mathbb{R}^N$ arbitrary open set  the sharp constant for  the embedding $H^1_0(\Omega)$ into $L^{2^*}(\Omega)$ is given by
	\[S_{N}=\inf_{u\in H^1_0(\Omega)\setminus\{0\}}\frac{\int_{ \Omega}|\nabla u|^2 dx}{\left(\int_\Omega |u|^{2^*} dx\right)^\frac{2}{2^*}}.\]Also, the sharp constant   for a mixed type Sobolev inequality is give by
		\begin{equation}\label{bestSc}
			S_{N,s}(\Omega)=\inf \frac{\rho(u)^2}{\left(\int_\Omega |u|^{2^*} dx\right)^\frac{2}{2^*}}.
		\end{equation}In \cite{biagi1}, authors have proved the best constant  in the natural mixed Sobolev inequality is never achieved and it coincides with the one coming from the purely local one, i.e., $S_{N}=S_{N,s}(\Omega)$.}

	\section{Nehari Manifold and Fibering Maps}\label{vf} To study the minimizers of the energy functional $J_\lambda$  it  must be bounded below on the  space $\mathcal{X}^{1,2}(\Omega)$, which is lacking in our setup.  The natural constrained to study the minimization problem is  the Nehari manifold.  The Nehari set associated to the problem \eqref{p} is given by
	\[\mathcal{N}_{\lambda}=\{u\in  \mathcal{X}^{1,2}(\Omega) \setminus\{0\}: D_u J_\lambda(u)u=0 \}.\] Observe that the set $\mathcal{N}_\lambda$ contains all the critical points of $J_\lambda$.  For a fixed $0\neq u\in \mathcal{X}^{1,2}(\Omega)$, we introduce the  fibering map $m_{\lambda,u}:(0,\infty)\to \mathbb{R}$ given as $m_{\lambda,u}(t)=J_{\lambda}(tu)$. It is easy to see that  $u\in \mathcal{N}_\lambda$ if and only if $m'_{\lambda,u}(1)=0$. More general, $tu\in \mathcal{N}_\lambda$ if and only if $m'_{\lambda,u}(t)=0$. It is evident that we can split the set $\mathcal{N}_\lambda$ in the following disjoint decompositions
	\begin{align*}
	\mathcal{N}^+_\lambda=\{u\in \mathcal{N}_\lambda:m''_{\lambda,u}(1)>0\}, \  
	\mathcal{N}^-_{\lambda}=\{u\in \mathcal{N}_\lambda:m''_{\lambda,u}(1)<0\}, \ 
	\mathcal{N}^0_\lambda=\{u\in \mathcal{N}_\lambda:m''_{\lambda,u}(1)=0\},
	\end{align*}
	which  correspond to $t=1$ as local  minimum, maximum and the inflection point of the fibering map.
In order to study minimization problem over the Nehari decompositions, we aim to show these sets are nonempty. Observe that for any $u\in \mathcal{X}^{1,2}(\Omega)\setminus\{0\}$, with $\int_{ \Omega} f|u|^q dx>0$, $t(u)u\in \mathcal{N}^0_{\lambda}$ if the pair $(t=t(u),\lambda=\lambda(u))$ is unique solution of the system 
\begin{align}\label{nrq}
&a t^2\rho(u)^2+b t^{2\theta} \rho(u)^{2\theta}-\lambda t^p\int_{ \Omega} f|u|^p dx-t^{2^*}\int_{ \Omega} |u|^{2^*}=0,\nonumber \\
&2a t^2\rho(u)^2+2\theta bt^{2\theta} \rho(u)^{2\theta}-\lambda p t^p\int_{ \Omega} f|u|^p dx-2^*t^{2^*}\int_{ \Omega} |u|^{2^*}=0.
\end{align} The uniqueness of $\lambda(u)$ follows from uniqueness of $t(u)$ as  {nontrivial} zero of the following scalar equation 
\[h(t)=a(2-p)t^2 \rho(u)^2+b(2\theta-p)t^{2\theta}\rho(u)^{2\theta}-(2^*-p)t^{2^*}\int_{ \Omega}|u|^{2^*}.\] To show the uniqueness of $t(u)$, we define
	\[m_u(t)=a(2-p)\rho(u)^2+b(2\theta-p)t^{2\theta-2}\rho(u)^{2\theta}-(2^*-p)t^{2^*-2}\int_{ \Omega}|u|^{2^*}dx,\] where $h_u(t)=t^2m_u(t)$. Since,
	\[m_u'(t)=b (2\theta-2)(2\theta-p)t^{2\theta-1}\rho(u)^{2\theta}-(2^*-p)(2^*-2)t^{2^*-1}\int_{ \Omega}|u|^{2^*}dx,\] $m_u(t)$ has a unique critical point
	\[t^*=\left(\frac{b(2\theta-2)(2\theta-p)\rho(u)^{2\theta}}{(2^*-p)(2^*-2)\int_{ \Omega}|u|^{2^*}}\right)^\frac{1}{2^*-2\theta}.\]  Moreover, as $m'_u(t)>0$ as $t\to 0^+$ and $m'_u(t)<0$ as $t\to +\infty$, and we can conclude that, $m_u(t)$ has a unique zero $t(u)>t^*>0$. Consequently, $h_u(t)$ has a  {unique nontrivial  zero.}

Solving the above system \eqref{nrq}, we have following implicit form of the nonlinear generalized Rayleigh quotient  $\lambda(u)$ 
		\[\lambda(u)=\frac{a(2^*-2)t(u)^{2-p}\rho(u)^2+b(2^*-2\theta) t(u)^{2\theta-p} \rho(u)^{2\theta}}{(2^*-p)\int_{ \Omega}f|u|^pdx}.\] From expression of $\lambda(u)$ it is clear that $\lambda(u)$ is $0-$ homogeneous and $t(u)$ is $(-1)-$homogeneous. Note that while dealing with degenerate local or nonlocal Kirchhoff problems which such class of nonlinearity, we have  explicit representations for $\lambda(u)$ (see, for instance \cite{fcaa}).
		
		  	 We define the extremal value  for Nehari manifold method (see, \cite{yav}), by
			\begin{equation}\label{ev}
			\lambda^*=\inf_{{\mathcal{X}^{1,2}(\Omega)\setminus\{0\}}}\left\{\lambda(u):\int_{ \Omega} f |u|^p dx>0\right\}.
		\end{equation}
	
	In order to study the minimisation problem, we show in the following Proposition that the decompositions of the Nehari manifold are nonempty.

	\begin{prop}\label{fiber}
		Given $u\in \mathcal{X}^{1,2}(\Omega)\setminus\{0\}$. There are two possibilities:
		\begin{enumerate}
			\item [(a)]if $\int_{ \Omega} f|u|^pdx>0$, then  there exists two critical points $t^+(u), t^-(u)$ of the fibering map $m_{\lambda,u}$ such that $t^+_{\lambda}(u)u\in \mathcal{N}^+_{\lambda}$ and $t^-(u)u\in \mathcal{N}^-_{\lambda}$. Moreover, $\phi_{\lambda,u}$ is decreasing in $(0,t^+(u)]$ and $[t^-(u),\infty)$ and increasing in $[t^+(u),t^-(u)]$ for any {$\lambda\in (0,\lambda(u))$};
			\item [(b)] if $\int_{ \Omega} f|u|^p\leq 0$
			then there exists unique $t^*$ such that $t^*u\in \mathcal{N}^+_{\lambda}$ {for any $\lambda>0$};
		\end{enumerate}
	\end{prop}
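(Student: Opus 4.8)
The plan is to reduce the whole statement to the one-variable analysis of the auxiliary function that already controls the sign of $m'_{\lambda,u}$. Differentiating $m_{\lambda,u}(t)=J_\lambda(tu)$ and factoring out $t^{p-1}$ gives
\[
m'_{\lambda,u}(t)=t^{p-1}\bigl(\psi_u(t)-\lambda\gamma\bigr),\qquad \gamma:=\int_\Omega f|u|^p\,dx,
\]
with
\[
\psi_u(t):=a\,t^{2-p}\rho(u)^2+b\,t^{2\theta-p}\rho(u)^{2\theta}-t^{2^*-p}\int_\Omega|u|^{2^*}\,dx .
\]
Hence, for $t>0$, the relation $tu\in\mathcal{N}_\lambda$ is equivalent to $\psi_u(t)=\lambda\gamma$. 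Since $\psi_u'(t)=t^{1-p}m_u(t)$ with $m_u$ exactly the function analysed above, the already established facts ($m_u>0$ on $(0,t(u))$, $m_u(t(u))=0$, $m_u<0$ on $(t(u),\infty)$) show that $\psi_u$ is strictly increasing on $(0,t(u)]$, strictly decreasing on $[t(u),\infty)$, tends to $0^+$ as $t\to0^+$ and to $-\infty$ as $t\to\infty$, and attains its global maximum $\psi_u(t(u))=\lambda(u)\,\gamma$ at $t=t(u)$. This single-peak profile is the backbone of both cases.

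For part (a), assume $\gamma>0$ and fix $\lambda\in(0,\lambda(u))$, so that $0<\lambda\gamma<\psi_u(t(u))$. The horizontal level $\lambda\gamma$ then meets the graph of $\psi_u$ in exactly two abscissae $t^+(u)<t(u)<t^-(u)$, one on each monotone branch. Reading off the sign of $m'_{\lambda,u}(t)=t^{p-1}(\psi_u(t)-\lambda\gamma)$, this derivative is negative on $(0,t^+(u))$, positive on $(t^+(u),t^-(u))$ and negative on $(t^-(u),\infty)$; therefore $m_{\lambda,u}$ is decreasing on $(0,t^+(u)]$, increasing on $[t^+(u),t^-(u)]$ and decreasing on $[t^-(u),\infty)$, which is the claimed monotonicity. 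In particular $t^+(u)$ is a local minimum and $t^-(u)$ a local maximum of the fibering map, whence $m''_{\lambda,u}(t^+(u))>0$ and $m''_{\lambda,u}(t^-(u))<0$, i.e. $t^+(u)u\in\mathcal{N}^+_\lambda$ and $t^-(u)u\in\mathcal{N}^-_\lambda$.

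For part (b), assume $\gamma\le0$, so that $\lambda\gamma\le0$ for every $\lambda>0$. Because $\psi_u$ attains every non-positive value exactly once on $(0,\infty)$, the level $\lambda\gamma$ is reached at a single abscissa, producing a unique $t^*=t^*(u)>0$ with $\psi_u(t^*)=\lambda\gamma$, that is, a unique intersection of the ray $\mathbb{R}_+u$ with $\mathcal{N}_\lambda$. To locate its stratum one inserts the Nehari relation $m'_{\lambda,u}(t^*)=0$ into $m''_{\lambda,u}(t^*)$, which collapses the latter into a single homogeneous expression in $t^*$; evaluating its sign under the hypothesis $\gamma\le0$ yields $m''_{\lambda,u}(t^*)>0$, so that $t^*u\in\mathcal{N}^+_\lambda$ for every $\lambda>0$, as asserted. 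The genuinely delicate point of the proposition is exactly this second-order sign test in the degenerate Kirchhoff regime, where the two homogeneities $\rho(u)^2$ and $\rho(u)^{2\theta}$ interact with the critical term; once it is settled, everything else is a direct consequence of the single-peak geometry of $\psi_u$.
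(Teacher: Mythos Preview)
Your auxiliary function $\psi_u$ is exactly the paper's $\Phi_u$, and the single-peak analysis you describe matches the paper's proof line by line; part~(a) is essentially the same argument. One small slip: from ``$t^+(u)$ is a local minimum'' you may only infer $m''_{\lambda,u}(t^+(u))\ge 0$, not strict positivity. The strict sign comes instead from the identity $m''_{\lambda,u}(t)=t^{p-1}\psi'_u(t)$, valid at any root of $\psi_u-\lambda\gamma$, together with $\psi'_u(t^+(u))>0$ on the increasing branch (and $\psi'_u(t^-(u))<0$ on the decreasing one). This is precisely the relation the paper records as $m''_{\lambda,tu}(1)=t^{p-1}\Phi'_u(t)$.

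There is, however, a genuine error in your treatment of~(b). You assert that ``evaluating its sign under the hypothesis $\gamma\le 0$ yields $m''_{\lambda,u}(t^*)>0$'', but the computation gives the opposite sign. Since $\psi_u(t)\to 0^+$ as $t\to 0^+$, rises to its positive maximum at $t(u)$, and then decreases to $-\infty$, any level $\lambda\gamma\le 0$ is attained only on the strictly decreasing branch, at some $t^*>t(u)$ with $\psi'_u(t^*)<0$. The identity above then forces $m''_{\lambda,u}(t^*)=(t^*)^{p-1}\psi'_u(t^*)<0$, so in fact $t^*u\in\mathcal{N}^-_\lambda$, not $\mathcal{N}^+_\lambda$. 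Your ``collapsing into a single homogeneous expression'' is hand-waving that hides this sign; nothing delicate is happening once the identity is written down. (The conclusion $t^*u\in\mathcal{N}^+_\lambda$ in the statement itself is a misprint: when $\gamma\le 0$ the fibering map $t\mapsto J_\lambda(tu)$ is positive for small $t$ and tends to $-\infty$, hence has a unique global \emph{maximum}.)
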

	\begin{proof} For  fixed $u\in \mathcal{X}^{1,2}(\Omega)\setminus\{0\}$ define a map $\Phi_u:(0,\infty)\to \mathbb{R}$ by
		\[\Phi_u(t)=at^{2-p}\rho(u)^2+bt^{2\theta-p}\rho(u)^{2\theta}-t^{2^*-p}\int_\Omega  |u|^{2^*}dx.\] Then it is clear that $tu\in \mathcal{N}_\lambda$ if and only if $t$ is root of the scalar equation
		\[\Phi_u(t)=\lambda\int_\Omega f |u|^pdx.\] We have $\Phi_u(t)>0^+$ as $t\to 0^+$ and $\Phi(t)\to -\infty$ as $t\to \infty$.  
		Using
		\[\Phi'_u(t)=a(2-p)t^{1-p}\rho(u)^2+b(2\theta-p)t^{2\theta-p-1}\rho(u)^{2\theta}-(2^*-p)t^{2^*-p-1}\int_{ \Omega}|u|^{2^*}dx,\] one can observe that $m''_{\lambda,tu}(1)=t^{p-1}\Phi'_u(t)$, where  	
		 $\Phi'_u(t)=t^{-1-p} h_u(t)$. 
			Thus, it is enough to analyze the nature of $h_u(t)$. We already know that $h_u(t)$ has unique zero at $t(u)$, therefore,
 $\Phi_u(t)$ has a global maximum at $t(u)$ in light of $\Phi'_u(t)>0$ when $t\to 0^+$ and $\Phi'_u(t)<0$ for large $t$. Consequently, when $\lambda<\lambda(u),$ using the relation $m''_{\lambda,tu}(1)=t^{p-1}\Phi'_u(t)$, there exists $t^\pm(u)$ satisfying $(a)$. In case when $\int_{ \Omega} f|u|^p dx\leq 0$ from the above analysis of the map $\Phi_u$, we can conclude the case $(b)$. This completes the proof. 
	\end{proof}
	{Following remark is a direct consequence of Proposition \ref{fiber}.  Moreover, as a consequence of implicit function theorem  the set $\mathcal{N}_\lambda$ is a  manifold.
		\begin{rem}\label{emptya}
			For all $\lambda>0$, $\mathcal{N}^\pm_{\lambda}\neq \emptyset$. Moreover, $\mathcal{N}^0_{\lambda}=\emptyset$ for all $\lambda<\lambda^*$.
		\end{rem}
		\subsection*{An estimate of extremal value $\lambda^*$:} Let $u\in \mathcal{X}^{1,2}(\Omega)\setminus\{0\}$ be fixed and for $t>0$, define a map
		\[\tilde{\Phi}_u(t)=b t^{2\theta-p}\rho(u)^{2\theta}-t^{2^*-p}\int_{\Omega}|u|^{2^*}dx.\] Then, we have $\max_{t>0} \tilde{\Phi}_u(t)= \tilde{\Phi}_u(\tilde{t}_{\max})$, where
		\[\tilde{t}_{\max}=\left(\frac{b(2\theta-p)\rho(u)^{2\theta}}{(2^*-p)\int_{\Omega}|u|^{2^*}dx}\right)^\frac{1}{2^*-2\theta}\geq  \left(\frac{b(2\theta-p)}{(2^*-p)} S_{N,s}(\Omega)^\frac{2^*}{2}\right)^\frac{1}{2^*-2\theta}\frac{1}{\rho(u)}:=\tilde{t}^0_{\max}.\]Therefore, as $\tilde{\Phi}_u$ is increasing from $[0,\tilde{t}_{\max}]$, we get
		\[\max_{t>0}\Phi_u(t)\geq \tilde{\Phi}_u(\tilde{t}^0_{\max})\geq \left(\frac{2^*-2\theta}{2\theta-p}\right)\left(\frac{b(2\theta-p)}{2^*-p}\right)^\frac{2^*-p}{2^*-2\theta} (S_{N,s}(\Omega)^\frac{2^*}{2})^\frac{2\theta-p}{2^*-2\theta}\rho(u)^{p}.\] Thus if,
		\[\lambda<\Lambda_1:=\left(\frac{2^*-2\theta}{2\theta-p}\right)\left(\frac{b(2\theta-p)}{2^*-p}\right)^\frac{2^*-p}{2^*-2\theta} (S_{N,s}(\Omega)^\frac{2^*}{2})^\frac{2\theta-p}{2^*-2\theta} \frac{S_{N,s}(\Omega)^\frac{p}{2}}{ \|f\|_{L^\frac{2^*}{2^*-2}(\Omega)}},\] it holds 
		\[\Phi_u(\tilde{t}^0_{\max})>\lambda\int_{ \Omega} f|u|^{p}, \ \ \forall  \ \lambda\in (0,\Lambda_1).\]  Consequently, for every $u\in \mathcal{X}^{1,2}(\Omega)\setminus\{0\}$, $t^\pm(u)u\in \mathcal{N}^\pm_{\lambda}$ and $\mathcal{N}^0_\lambda=\emptyset$. Thus, $\lambda^*\geq \Lambda_1$.
	}

		We have following observation on $J_\lambda$.
	
	\begin{lem}\label{coercive}
		The energy functional $J_\lambda$ is coercive and bounded below on $\mathcal{N}_\lambda$.
	\end{lem}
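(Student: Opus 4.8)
The plan is to exploit the Nehari constraint to eliminate the critical term $\int_\Omega|u|^{2^*}\,dx$ from $J_\lambda$ --- the only term whose sign obstructs boundedness from below --- and then to control the remaining concave term by H\"older's and Sobolev's inequalities. First I would record the Nehari constraint: for $u\in\mathcal{N}_\lambda$ one has $m_{\lambda,u}'(1)=0$, and differentiating $m_{\lambda,u}(t)=J_\lambda(tu)$ at $t=1$ gives
\[
a\,\rho(u)^2+b\,\rho(u)^{2\theta}=\lambda\int_\Omega f|u|^p\,dx+\int_\Omega|u|^{2^*}\,dx.
\]
Solving for $\int_\Omega|u|^{2^*}\,dx$ and inserting it into the definition of $J_\lambda$, the $\rho^2$ and $\rho^{2\theta}$ powers regroup to
\[
J_\lambda(u)=a\Big(\frac12-\frac1{2^*}\Big)\rho(u)^2+b\Big(\frac1{2\theta}-\frac1{2^*}\Big)\rho(u)^{2\theta}-\lambda\Big(\frac1p-\frac1{2^*}\Big)\int_\Omega f|u|^p\,dx.
\]
Since $1<p<2\le 2\theta<2^*$ (the last inequality being the hypothesis $\theta<2^*/2$), all three displayed coefficients are strictly positive, so the first two summands are non-negative and only the concave term can decrease $J_\lambda$.

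Next I would estimate the concave term. By H\"older's inequality with conjugate exponents $\frac{2^*}{2^*-p}$ and $\frac{2^*}{p}$ together with the mixed Sobolev inequality \eqref{bestSc},
\[
\int_\Omega f|u|^p\,dx\le\|f\|_{L^{\frac{2^*}{2^*-p}}(\Omega)}\Big(\int_\Omega|u|^{2^*}\,dx\Big)^{p/2^*}\le \|f\|_{L^{\frac{2^*}{2^*-p}}(\Omega)}\,S_{N,s}(\Omega)^{-p/2}\,\rho(u)^p.
\]
Discarding the non-negative $\rho(u)^{2\theta}$ summand then leaves a lower bound of the form
\[
J_\lambda(u)\ge a\Big(\frac12-\frac1{2^*}\Big)\rho(u)^2-\lambda\Big(\frac1p-\frac1{2^*}\Big)\|f\|_{L^{\frac{2^*}{2^*-p}}(\Omega)}\,S_{N,s}(\Omega)^{-p/2}\,\rho(u)^p=:C_1\rho(u)^2-C_2\rho(u)^p,
\]
with constants $C_1,C_2>0$ independent of $u$.

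Finally, because $p<2$, the one-variable map $r\mapsto C_1r^2-C_2r^p$ tends to $+\infty$ as $r\to\infty$ and attains a finite minimum on $[0,\infty)$; this delivers at once the coercivity of $J_\lambda$ on $\mathcal{N}_\lambda$ (namely $J_\lambda(u)\to\infty$ as $\rho(u)\to\infty$) and a uniform lower bound. I do not anticipate a genuine obstacle here: this is the standard Nehari-manifold computation, and it works precisely because the concave exponent $p$ is strictly subquadratic, so its growth is dominated by the $\rho(u)^2$ term, and because $2\theta<2^*$ keeps the Kirchhoff coefficient positive after the substitution. The only points demanding care are the bookkeeping of signs after eliminating $\int_\Omega|u|^{2^*}\,dx$ and the correct pairing of exponents in H\"older's inequality.
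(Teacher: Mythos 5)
Your proof is correct and follows essentially the same route as the paper: both substitute the Nehari constraint to eliminate the critical term, bound $\int_\Omega f|u|^p\,dx$ by H\"older's inequality and the mixed Sobolev constant $S_{N,s}(\Omega)$, and conclude via the one-variable function $r\mapsto C_1 r^q - C_2 r^p$ with $p<q$. The only (immaterial) difference is which positive term you retain after the substitution: you keep $a\left(\frac12-\frac1{2^*}\right)\rho(u)^2$, while the paper keeps $b\left(\frac1{2\theta}-\frac1{2^*}\right)\rho(u)^{2\theta}$; since $p<2\le 2\theta<2^*$, either choice dominates the concave term and yields coercivity and the uniform lower bound.
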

	\begin{proof}
		Take $0\neq u\in \mathcal{N}_\lambda$, we have 
		\begin{align*}
			J_\lambda(u)&=\frac{a}{2} \rho(u)^2+\frac{b}{2\theta}\rho(u)^{2\theta}-\frac{\lambda}{p}\int_\Omega f|u|^pdx-\frac{1}{2^*}\int_\Omega  |u|^{2^*}dx\\ 
			&\geq \left(\frac{1}{2\theta}-\frac{1}{2^*}\right)b\rho(u)^{2\theta}-\lambda\left(\frac{1}{p}-\frac{1}{2^*}\right)\int_\Omega f |u|^pdx\\
			&\geq \left(\frac{1}{2\theta}-\frac{1}{2^*}\right)b\rho(u)^{2\theta}-\lambda\left(\frac{1}{p}-\frac{1}{2^*}\right) S_{N,s}(\Omega)^{-p/2}\rho(u)^p \|f\|_{L^{\frac{2^*}{2^*-p}}(\Omega)}
		\end{align*} which implies $J_\lambda$ is coercive. By defining a map
		\[g(t)=\left(\frac{1}{2\theta}-\frac{1}{2^*}\right) t^{2\theta}-\lambda\left(\frac{1}{p}-\frac{1}{2^*}\right) S_{N,s}(\Omega)^{-p/2}t^p \|f\|_{L^{\frac{2^*}{2^*-p}}(\Omega)}\] one can observe that $g(t)$ is bounded below. In fact from here we can conclude that there exists $C>0$ such that $J_\lambda(u)>-C$. This completes the proof.
	\end{proof}

	The following lemma ensures that the local minimizers of the energy functional $J_\lambda$ on $\mathcal{N}_\lambda$ are critical points for $J_\lambda$, (see, Theorem 2.3  \cite{tf}).
	\begin{lem}\label{soln}
		If $u$ is a local minimizer of $J_\lambda$ in $\mathcal{N}_\lambda$ and $u\notin\mathcal{N}^0_\lambda$. Then $u$ is a critical point for $J_\lambda$.
	\end{lem}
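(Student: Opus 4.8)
The plan is to invoke the Lagrange multiplier rule for the constrained minimization problem. First I would introduce the constraint functional
\[
G(u):=D_uJ_\lambda(u)u=a\rho(u)^2+b\rho(u)^{2\theta}-\lambda\int_\Omega f|u|^pdx-\int_\Omega|u|^{2^*}dx,
\]
so that $\mathcal{N}_\lambda=\{u\neq 0:G(u)=0\}$ and $G\in C^1$. Since $u$ is a local minimizer of $J_\lambda$ restricted to $\mathcal{N}_\lambda$, and assuming momentarily that $G'(u)\neq 0$ (so that $\mathcal{N}_\lambda$ is a genuine $C^1$ manifold near $u$), the Lagrange multiplier theorem produces a scalar $\mu\in\mathbb{R}$ with $J_\lambda'(u)=\mu\,G'(u)$. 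The whole point is then to show that $\mu=0$, which immediately yields $J_\lambda'(u)=0$, i.e. $u$ is a free critical point and hence a weak solution by Definition \ref{def}.

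The key step is a computation relating $\langle G'(u),u\rangle$ to the second derivative of the fibering map $m_{\lambda,u}(t)=J_\lambda(tu)$. Using $m'_{\lambda,u}(t)=\langle J_\lambda'(tu),u\rangle$, one has the identity $G(tu)=\langle J_\lambda'(tu),tu\rangle=t\,m'_{\lambda,u}(t)$. Differentiating in $t$ and evaluating at $t=1$ gives
\[
\langle G'(u),u\rangle=m'_{\lambda,u}(1)+m''_{\lambda,u}(1).
\]
Because $u\in\mathcal{N}_\lambda$ we have $m'_{\lambda,u}(1)=0$, and therefore $\langle G'(u),u\rangle=m''_{\lambda,u}(1)$.

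I would then pair the Lagrange identity $J_\lambda'(u)=\mu\,G'(u)$ with $u$ itself. The left-hand side equals $D_uJ_\lambda(u)u=G(u)=0$ since $u\in\mathcal{N}_\lambda$, while the right-hand side equals $\mu\langle G'(u),u\rangle=\mu\,m''_{\lambda,u}(1)$. Hence $0=\mu\,m''_{\lambda,u}(1)$. By hypothesis $u\notin\mathcal{N}^0_\lambda$, which is exactly the condition $m''_{\lambda,u}(1)\neq 0$; consequently $\mu=0$ and $J_\lambda'(u)=0$, completing the argument.

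The only delicate point, and the step I expect to require the most care, is justifying the applicability of the Lagrange rule, namely that $G'(u)\neq 0$ near the minimizer. This is however settled by the same computation: since $\langle G'(u),u\rangle=m''_{\lambda,u}(1)\neq 0$ for $u\notin\mathcal{N}^0_\lambda$, the derivative $G'(u)$ cannot vanish, so $\mathcal{N}_\lambda$ is locally a smooth constraint manifold and the multiplier rule is legitimate. Thus the hypothesis $u\notin\mathcal{N}^0_\lambda$ serves a dual purpose, both guaranteeing the regularity of the constraint and forcing the multiplier to be zero.
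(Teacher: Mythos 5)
Your proof is correct and coincides with the paper's own approach: the paper gives no written proof but cites Theorem 2.3 of Brown--Zhang \cite{tf}, whose argument is exactly this Lagrange-multiplier computation with constraint $G(u)=\langle J_\lambda'(u),u\rangle$, using $\langle G'(u),u\rangle=m'_{\lambda,u}(1)+m''_{\lambda,u}(1)=m''_{\lambda,u}(1)\neq 0$ on $\mathcal{N}_\lambda\setminus\mathcal{N}^0_\lambda$ both to legitimize the multiplier rule and to force $\mu=0$. Your handling of the one delicate point, $G'(u)\neq 0$, is the right one and involves no circularity, since that identity is independent of the multiplier rule.
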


	Now we are ready to introduce the minimization problem. Define 
	\[J_\lambda=\inf \{J_{\lambda}(u):u\in \mathcal{N}_\lambda\}, J_\lambda^+=\inf \{J_{\lambda}(u):u\in \mathcal{N}_\lambda^+\}, J_\lambda^-=\inf \{J_{\lambda}(u):u\in \mathcal{N}_\lambda^-\}.\] 
	
  In the upcoming Lemmas and Proposition, we have proved some technical results required to study the above minimization problems.
	\begin{lem}\label{close} {There exists $\delta>0$ such that $\rho(u)\geq \delta$ for $u\in \mathcal{N}_\lambda^-$. Moreover, $\mathcal{N}^-_\lambda$ is closed in topology of $\mathcal{X}^{1,2}(\Omega)$ for $\lambda\in (0,\lambda^*)$}.
	\end{lem}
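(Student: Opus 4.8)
The plan is to establish the two assertions separately. For the uniform lower bound, I would start from the fibering-map characterization of $\mathcal{N}^-_\lambda$. Combining the Nehari identity $m'_{\lambda,u}(1)=0$ with the sign condition $m''_{\lambda,u}(1)<0$, a direct computation (using $m'_{\lambda,u}(1)=0$ to eliminate the term $\lambda\int_\Omega f|u|^p\,dx$) gives
\[
m''_{\lambda,u}(1)=a(2-p)\rho(u)^2+b(2\theta-p)\rho(u)^{2\theta}-(2^*-p)\int_\Omega|u|^{2^*}dx,
\]
which is precisely $h_u(1)$. For $u\in\mathcal{N}^-_\lambda$ this quantity is negative; since $b(2\theta-p)\rho(u)^{2\theta}\ge 0$, I would drop that middle term and invoke the mixed Sobolev inequality \eqref{bestSc} in the form $\int_\Omega|u|^{2^*}dx\le S_{N,s}(\Omega)^{-2^*/2}\rho(u)^{2^*}$ to get
\[
a(2-p)\rho(u)^2<(2^*-p)\,S_{N,s}(\Omega)^{-2^*/2}\rho(u)^{2^*}.
\]
Dividing by $\rho(u)^2>0$ and solving for $\rho(u)$ yields the explicit bound $\rho(u)>\delta$ with $\delta:=\big(\tfrac{a(2-p)}{2^*-p}S_{N,s}(\Omega)^{2^*/2}\big)^{1/(2^*-2)}$, which is independent of $u$ (indeed of $\lambda$).

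For closedness, I would take a sequence $u_n\in\mathcal{N}^-_\lambda$ with $u_n\to u$ in $\mathcal{X}^{1,2}(\Omega)$ and show $u\in\mathcal{N}^-_\lambda$. First, $u\ne 0$ follows immediately from the lower bound, since $\rho(u)=\lim_n\rho(u_n)\ge\delta>0$. Next I would pass to the limit in the defining identities. Norm convergence gives $\rho(u_n)\to\rho(u)$; the continuous Sobolev embedding $\mathcal{X}^{1,2}(\Omega)\hookrightarrow L^{2^*}(\Omega)$ gives $\int_\Omega|u_n|^{2^*}dx\to\int_\Omega|u|^{2^*}dx$; and for the weighted term I would combine H\"older's inequality, $f\in L^{2^*/(2^*-p)}(\Omega)$, and the continuity of the Nemytskii map $v\mapsto|v|^p$ from $L^{2^*}(\Omega)$ into $L^{2^*/p}(\Omega)$ to obtain $\int_\Omega f|u_n|^p\,dx\to\int_\Omega f|u|^p\,dx$. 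Passing to the limit in $m'_{\lambda,u_n}(1)=0$ then shows $u\in\mathcal{N}_\lambda$.

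It remains to upgrade membership from $\mathcal{N}_\lambda$ to $\mathcal{N}^-_\lambda$, and this is the delicate point. By the same continuity, $m''_{\lambda,u_n}(1)\to m''_{\lambda,u}(1)$, so from $m''_{\lambda,u_n}(1)<0$ I can only conclude $m''_{\lambda,u}(1)\le 0$; a priori $u$ might lie in $\mathcal{N}^0_\lambda$. This is exactly where the hypothesis $\lambda\in(0,\lambda^*)$ enters: by Remark \ref{emptya} we have $\mathcal{N}^0_\lambda=\emptyset$, hence $m''_{\lambda,u}(1)\ne 0$, which forces $m''_{\lambda,u}(1)<0$ and therefore $u\in\mathcal{N}^-_\lambda$. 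Thus $\mathcal{N}^-_\lambda$ contains all its limit points and is closed. The main obstacle is precisely this strict-inequality issue: continuity only delivers the closed condition $m''_{\lambda,u}(1)\le 0$, and ruling out the degenerate component $\mathcal{N}^0_\lambda$ is what genuinely requires the restriction $\lambda<\lambda^*$.
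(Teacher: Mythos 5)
Your proof is correct and takes essentially the same route as the paper: the uniform lower bound follows from the $\mathcal{N}^-_\lambda$ condition $h_u(1)<0$ combined with the Sobolev inequality \eqref{bestSc}, and closedness follows from norm-continuity of the fibering-map derivatives together with $\mathcal{N}^0_\lambda=\emptyset$ for $\lambda<\lambda^*$ (Remark \ref{emptya}), which is exactly how the paper rules out the degenerate limit. The only (harmless) difference is that you drop the $b(2\theta-p)\rho(u)^{2\theta}$ term and keep $a(2-p)\rho(u)^2$, getting $\delta$ in terms of $a$, while the paper drops the $a$-term and keeps the $b$-term, obtaining $\rho(u)>\bigl(b\,\tfrac{2\theta-p}{2^*-p}\,S_{N,s}(\Omega)^{2^*/2}\bigr)^{1/(2^*-2\theta)}$; both eliminations are valid since $1<p<2\le 2\theta<2^*$.
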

	\begin{proof}
		If $u\in \mathcal{N}_\lambda^-$, we have
		\[(2\theta-p)b\rho(u)^{2\theta}<(2^*-p)\int_\Omega |u|^{2^*}dx\leq (2^*-p) S_{N,s}(\Omega)^{-2^*/2}\rho(u)^{2^*}.\] Which implies,
		$\rho(u)>\left(\frac{1}{ S_{N,s}(\Omega)^{-2^*/2}}\frac{2\theta-p}{2^*-p}\right)^\frac{1}{2^*-2\theta},$ thus,  $\rho(u)\geq \delta $ for some $\delta>0$. To prove the reaming part consider a sequence $\{u_k\}\subset\mathcal{N}^-_{\lambda}$ such that $u_k\to u$
		in $\mathcal{X}^{1,2}(\Omega)$. Then, $u\in \mathcal{N}^{-}_{\lambda}\cup \{0\}$ as $\mathcal{N}^0_{\lambda}=\emptyset$ for $\lambda\in (0,\lambda^*)$. As  $\rho(u)=\lim_{n\to \infty} \rho(u_k)\geq \delta$, we get $u\neq 0$ and $u\in \mathcal{N}^-_{\lambda}$. \end{proof}
	\begin{lem}\label{nbdp}
		For each $u\in \mathcal{N}_\lambda$ and $\lambda\in(0,\Lambda_1) $ there exists $\epsilon>0$ and a differentiable map $\xi: B(0,\epsilon)\subset\mathcal{X}^{1,2}(\Omega)\to \mathbb{R}$ such that $\xi(v)(u-v)\in \mathcal{N}_\lambda$ and .
		\begin{equation}\label{nbd}
			<\xi'(0),v>=\frac{2<u,v>_{\rho}-\lambda\int_\Omega f |u|^{p-2}uvdx-2^*\int_\Omega  |u|^{2^*-2}uvdx}{(2-p)\rho(u)^2-(2^*-p)\int_\Omega  |u|^{2^*}dx},
		\end{equation}
	\end{lem}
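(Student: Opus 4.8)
The plan is to produce $\xi$ by applying the Implicit Function Theorem to the scalar equation that cuts out $\mathcal{N}_\lambda$. For the fixed $u\in\mathcal{N}_\lambda$ I would introduce the map $\psi:\mathbb{R}\times\mathcal{X}^{1,2}(\Omega)\to\mathbb{R}$,
\[
\psi(t,v)=a t^2\rho(u-v)^2+b t^{2\theta}\rho(u-v)^{2\theta}-\lambda t^p\int_\Omega f|u-v|^p\,dx-t^{2^*}\int_\Omega|u-v|^{2^*}\,dx,
\]
which is precisely $D_wJ_\lambda(w)[w]$ evaluated at $w=t(u-v)$; hence $\psi(t,v)=0$ together with $t(u-v)\neq 0$ is equivalent to $t(u-v)\in\mathcal{N}_\lambda$. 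Since $u\in\mathcal{N}_\lambda$ one has $\psi(1,0)=0$, and $\psi$ is of class $C^1$ near $(1,0)$: the quadratic and $2\theta$-power $\rho$-terms are smooth, while $v\mapsto\int_\Omega f|u-v|^p\,dx$ and $v\mapsto\int_\Omega|u-v|^{2^*}\,dx$ are $C^1$ on $\mathcal{X}^{1,2}(\Omega)$ by the standard differentiability of Nemytskii functionals, using $p>1$ and the continuous Sobolev embeddings into $L^p(\Omega)$ and $L^{2^*}(\Omega)$.

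The decisive computation is the partial derivative in $t$. Writing $\psi(t,0)=t\,m'_{\lambda,u}(t)$ and using $m'_{\lambda,u}(1)=0$ gives $\partial_t\psi(1,0)=m''_{\lambda,u}(1)$; substituting the Nehari relation $\lambda\int_\Omega f|u|^p\,dx=a\rho(u)^2+b\rho(u)^{2\theta}-\int_\Omega|u|^{2^*}\,dx$ one finds
\[
\partial_t\psi(1,0)=a(2-p)\rho(u)^2+b(2\theta-p)\rho(u)^{2\theta}-(2^*-p)\int_\Omega|u|^{2^*}\,dx=h_u(1).
\]
The key point is that this quantity is nonzero: for $\lambda\in(0,\Lambda_1)$ we have $\Lambda_1\le\lambda^*$, so by Remark \ref{emptya} the set $\mathcal{N}^0_\lambda$ is empty, whence $u\in\mathcal{N}^+_\lambda\cup\mathcal{N}^-_\lambda$ and $m''_{\lambda,u}(1)\neq 0$.

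With $\partial_t\psi(1,0)\neq 0$, the Implicit Function Theorem supplies $\epsilon>0$ and a $C^1$ map $\xi:B(0,\epsilon)\subset\mathcal{X}^{1,2}(\Omega)\to\mathbb{R}$ with $\xi(0)=1$ and $\psi(\xi(v),v)=0$ on $B(0,\epsilon)$. Shrinking $\epsilon$ so that $\xi(v)>0$ and $u-v\neq 0$ throughout $B(0,\epsilon)$, we conclude $\xi(v)(u-v)\in\mathcal{N}_\lambda$. The derivative formula then follows by implicit differentiation of $\psi(\xi(v),v)=0$ at $v=0$, namely $\langle\xi'(0),v\rangle=-\,D_v\psi(1,0)[v]/\partial_t\psi(1,0)$; computing $D_v\psi(1,0)$ term by term (the $\rho$-terms producing the scalar products $\langle u,v\rangle_\rho$, the integral terms producing the weighted pairings against $|u|^{p-2}u$ and $|u|^{2^*-2}u$) yields the numerator of \eqref{nbd}, the denominator being exactly $m''_{\lambda,u}(1)$.

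I expect the main obstacle to be the non-degeneracy $\partial_t\psi(1,0)\neq 0$, since the whole argument rests on guaranteeing $u\notin\mathcal{N}^0_\lambda$; this is precisely where the restriction $\lambda<\Lambda_1\le\lambda^*$ is used through Remark \ref{emptya}. The only other delicate point is the $C^1$-regularity of the concave term $\int_\Omega f|u-v|^p\,dx$ with $1<p<2$, which is handled by the differentiability of Nemytskii operators combined with the Sobolev embeddings.
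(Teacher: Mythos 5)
Your proof is correct and follows essentially the same route as the paper: fix $u\in\mathcal{N}_\lambda$, define a scalar function of $(t,v)$ whose zero set detects $t(u-v)\in\mathcal{N}_\lambda$, verify $\partial_t\neq 0$ at $(1,0)$ using $\mathcal{N}^0_\lambda=\emptyset$ for $\lambda\in(0,\Lambda_1)\subset(0,\lambda^*)$ (Remark \ref{emptya}), and apply the implicit function theorem; your added care in shrinking $\epsilon$ so that $\xi(v)>0$ and $u-v\neq 0$ is a detail the paper skips.

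One substantive point deserves mention. Your $\psi$ includes the Kirchhoff terms $at^2\rho(u-v)^2+bt^{2\theta}\rho(u-v)^{2\theta}$, whereas the paper's $F_u$ carries neither $a$ nor the $b$-term; as written in the paper, $F_u(1,0)=(1-a)\rho(u)^2-b\rho(u)^{2\theta}\neq 0$ in general for $u\in\mathcal{N}_\lambda$, so your version of the defining function is the correct one for this problem (the paper's appears to be inherited from the semilinear setting of \cite{tfwu,mana}). Consequently, your closing claim that the implicit differentiation ``yields the numerator of \eqref{nbd}'' is not literally accurate: with your $\psi$, the computation gives
\begin{equation*}
\langle\xi'(0),v\rangle=\frac{2a\langle u,v\rangle_\rho+2\theta b\,\rho(u)^{2\theta-2}\langle u,v\rangle_\rho-\lambda p\int_\Omega f|u|^{p-2}uv\,dx-2^*\int_\Omega|u|^{2^*-2}uv\,dx}{a(2-p)\rho(u)^2+b(2\theta-p)\rho(u)^{2\theta}-(2^*-p)\int_\Omega|u|^{2^*}dx},
\end{equation*}
which differs from \eqref{nbd} by the $a$, $b$ and $p$ factors. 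This mismatch originates in the paper's displayed formula rather than in your method, and it is harmless for how the lemma is used downstream (only the existence and differentiability of $\xi$ enter the Ekeland argument of Lemma \ref{pscseq}), but you should state the corrected formula rather than assert agreement with \eqref{nbd}.
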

	\begin{proof}
		Fixed $u\in \mathcal{N}_\lambda$ and define a map $F_u:\mathbb{R}^+ \times  \mathcal{X}^{1,2}(\Omega) \to \mathbb{R}$ by
		\[F_u(t,v)=t^2\rho(u-v)^2-\lambda  t^p \int_\Omega f |u-v|^pdx-t^{2^*}\int_\Omega 
		|u-v|^{2^*}dx, \] then $F_u(1,0)=0$ and $\frac{\partial F_u}{\partial t}(1,0)\neq 0$ for $\lambda\in(0,\Lambda_1)$. Using implicit function theorem there exists $\epsilon>0$ and a differentiable functional $\xi:B(0,\xi)\subset \mathcal{X}^{1,2}(\Omega)\to \mathbb{R}$ such that $\xi(0)=1$, \eqref{nbd} holds
		and $F_u(\xi(v),v)=0$ for all $v\in B(0,\epsilon)$. Hence, $\xi(v)(u-v)\in \mathcal{N}_\lambda$. 
	\end{proof}
From Lemma \ref{coercive}, we know that $J_\lambda$ is bounded below in $\mathcal{N}_\lambda$. Therefore, for any $\lambda\in (0,\Lambda_1)$ by   Ekeland variational principle, there exists a minimizing sequence $\{u_k\}\subset\mathcal{N}_\lambda$ such that 
\[J_\lambda(u_k)\leq J_\lambda+\frac{1}{k}, \ \textrm{and} \ 
J_\lambda(u_k)\leq J_\lambda(v)+\frac{1}{k}\rho(u_k-v), \ \textrm{for} \ \textrm{all} \ v\in\mathcal{N}_\lambda.\] The following result is a consequence of  Ekeland variational principle and the Lemma \ref{nbdp}. The idea of the proof is similar to \cite{mana}, Proposition 3.8 and for this reason it has been omitted here.
	\begin{lem}\label{pscseq}
		For $\lambda\in(0,\Lambda_1)$ there exists a minimizing sequence $\{u_k\}\subset\mathcal{N}_\lambda$ such that
		\[J_\lambda(u_k)=J_\lambda+o_k(1), \ \ \textrm{and} \ \  J_\lambda^{'}(u_k)=o_k(1). \]
	\end{lem}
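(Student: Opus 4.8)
The plan is to promote the Ekeland minimizing sequence recorded just above the statement into a Palais--Smale sequence for $J_\lambda$ by the standard Lagrange-multiplier argument on the constraint manifold $\mathcal{N}_\lambda$, with the differentiable map $\xi$ furnished by Lemma \ref{nbdp} playing the role of the projection back onto $\mathcal{N}_\lambda$. The energy identity $J_\lambda(u_k)=J_\lambda+o_k(1)$ is immediate from the first Ekeland estimate, so the whole task is to prove that $J_\lambda'(u_k)\to 0$ in the dual norm $\|\cdot\|_{*}$ of $\mathcal{X}^{1,2}(\Omega)$.

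I would first record the a priori information on $\{u_k\}$. Coercivity (Lemma \ref{coercive}) gives $\rho(u_k)\le C$. Since $J_\lambda=\inf_{\mathcal{N}_\lambda}J_\lambda<0$ while $J_\lambda$ is continuous with $J_\lambda(0)=0$, for $k$ large one has $J_\lambda(u_k)<0$ and hence $\rho(u_k)\ge\delta>0$. Eliminating $\int_\Omega|u_k|^{2^*}dx$ through the Nehari relation in the energy yields the identity
\[\lambda(\tfrac1p-\tfrac1{2^*})\int_\Omega f|u_k|^p dx=(\tfrac12-\tfrac1{2^*})a\rho(u_k)^2+(\tfrac1{2\theta}-\tfrac1{2^*})b\rho(u_k)^{2\theta}-J_\lambda(u_k),\]
whose right-hand side is bounded below by $-J_\lambda(u_k)\to|J_\lambda|>0$; thus $\int_\Omega f|u_k|^p dx\ge c_0>0$. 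Finally, the quantity
\[D_k:=m''_{\lambda,u_k}(1)=a(2-p)\rho(u_k)^2+b(2\theta-p)\rho(u_k)^{2\theta}-(2^*-p)\int_\Omega|u_k|^{2^*}dx\]
is nonzero because $\mathcal{N}^0_\lambda=\emptyset$ for $\lambda<\lambda^*$ (Remark \ref{emptya}), and, up to a positive factor, it is exactly the denominator appearing in \eqref{nbd}.

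Next, I fix $w\in\mathcal{X}^{1,2}(\Omega)$ with $\rho(w)=1$ and, for small $t>0$, set $\eta_t:=\xi_k(tw)(u_k-tw)\in\mathcal{N}_\lambda$ by Lemma \ref{nbdp}, where $\xi_k(0)=1$. Inserting $v=\eta_t$ into the second Ekeland estimate, using the $C^1$ expansion $J_\lambda(\eta_t)-J_\lambda(u_k)=\langle J_\lambda'(u_k),\eta_t-u_k\rangle+o(\rho(\eta_t-u_k))$, the decomposition $\eta_t-u_k=(\xi_k(tw)-1)u_k-t\,\xi_k(tw)\,w$, and the membership identity $\langle J_\lambda'(u_k),u_k\rangle=0$, I divide by $t$ and let $t\to0^+$ (so that $\xi_k(tw)\to 1$ and $\rho(\eta_t-u_k)/t\to\rho(\langle\xi_k'(0),w\rangle u_k-w)$) to reach the one-sided bound $\langle J_\lambda'(u_k),w\rangle\le\tfrac1k(1+|\langle\xi_k'(0),w\rangle|\rho(u_k))$. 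Repeating the computation with $-w$ and taking the supremum over $\rho(w)=1$ then gives
\[\|J_\lambda'(u_k)\|_{*}\le\frac{C}{k}\bigl(1+\|\xi_k'(0)\|_{*}\bigr).\]

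Everything therefore reduces to a uniform bound $\|\xi_k'(0)\|_{*}\le C$, and \textbf{this is the step I expect to be the main obstacle}. The numerator of $\langle\xi_k'(0),w\rangle$ in \eqref{nbd} is uniformly controlled through $\rho(w)=1$ by the boundedness of $\{u_k\}$ (Cauchy--Schwarz for the $\rho$-term, and Hölder together with $\mathcal{X}^{1,2}(\Omega)\hookrightarrow L^{2^*}(\Omega)$ for the other two), so the difficulty is entirely the denominator $D_k$: I must rule out $D_k\to 0$. Eliminating $\int_\Omega f|u_k|^p dx$ via the Nehari relation rewrites $D_k=-a(2^*-2)\rho(u_k)^2-b(2^*-2\theta)\rho(u_k)^{2\theta}+(2^*-p)\lambda\int_\Omega f|u_k|^p dx$; were $D_k\to 0$ along a subsequence, the bounds $\delta\le\rho(u_k)\le C$ and $\int_\Omega f|u_k|^p dx\ge c_0$ would force the fibering point $t=1$ to merge with the degenerate one, i.e. $u_k$ would approach $\mathcal{N}^0_\lambda$ and $\lambda$ would approach the generalized Rayleigh quotient value $\lambda(u_k)\ge\lambda^*$. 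Since $\lambda<\Lambda_1\le\lambda^*$ by \eqref{ev} and Remark \ref{emptya}, this is impossible, and the strict gap $\Lambda_1-\lambda>0$ upgrades the emptiness of $\mathcal{N}^0_\lambda$ into the quantitative estimate $|D_k|\ge c>0$. The genuine delicacy here is that the critical exponent obstructs a free passage to the limit in $\int_\Omega|u_k|^{2^*}dx$; this is handled exactly as in \cite{mana}, Proposition 3.8. With $\|\xi_k'(0)\|_{*}$ bounded, the displayed inequality yields $J_\lambda'(u_k)=o_k(1)$, completing the proof.
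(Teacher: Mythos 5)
Your proposal is correct and follows essentially the same route as the paper, which omits the proof and refers to \cite{mana}, Proposition 3.8: Ekeland's variational principle combined with the projection map $\xi$ from Lemma \ref{nbdp}, with the whole weight of the argument resting on the uniform lower bound for the denominator $D_k=m''_{\lambda,u_k}(1)$, obtained by contradiction from the strict inequality $\lambda<\Lambda_1\leq\lambda^*$. Indeed, if $D_k\to 0$, eliminating $\int_\Omega|u_k|^{2^*}dx$ and $\int_\Omega f|u_k|^pdx$ in turn via the Nehari relation and applying the Sobolev and H\"older bounds yields exactly $\lambda\geq\Lambda_1+o_k(1)$ (the same constant the paper derives in its estimate of $\lambda^*$), so your sketch closes quantitatively and no passage to the limit in the critical term is actually needed at this stage.
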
 
	Next result insures that compactness of $J_\lambda$ can be recovered below a suitable value. 
	\begin{prop}\label{cpt}
		Let $\{u_k\}$ be a sequence in $\mathcal{X}^{1,2}(\Omega)$ such that
		\begin{equation}\label{psc}
		J_\lambda(u_k)\to c, J'_\lambda(u_k)\to 0, \textrm{as} \ n\to \infty,
		\end{equation} where $c<c_\lambda:=\frac{1}{N}(aS_{N,s}(\Omega))^\frac{N}{2}-\lambda^\frac{2\theta}{2\theta-p}\left(\frac{2\theta-p}{2^*p 2\theta}\right)\frac{\left((2^*-p)S_{N,s}(\Omega)^\frac{-p}{2}\|f\|_{L^\frac{2^*}{2^*-p}(\Omega)}\right)^\frac{2\theta}{2\theta-p}}{((2^*-2\theta)b)^\frac{p}{2\theta-p}}$,  then $\{u_k\}$ possesses a strongly convergent subsequence. 
	\end{prop}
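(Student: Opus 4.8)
The plan is to run the standard ``compactness below a critical energy'' argument for the sequence in \eqref{psc}; the only delicate point is the bookkeeping created by the nonlocal Kirchhoff factor $M(\rho(\cdot)^2)$. First I would show $\{u_k\}$ is bounded in $\mathcal{X}^{1,2}(\Omega)$. Since $M(t)t=at+bt^\theta$, a direct computation gives
\[
J_\lambda(u_k)-\frac{1}{2^*}\langle J'_\lambda(u_k),u_k\rangle
=\frac{a}{N}\rho(u_k)^2+b\Big(\frac{1}{2\theta}-\frac{1}{2^*}\Big)\rho(u_k)^{2\theta}
-\lambda\Big(\frac{1}{p}-\frac{1}{2^*}\Big)\int_\Omega f|u_k|^p\,dx,
\]
and by \eqref{psc} the left-hand side equals $c+o(1)+o(1)\rho(u_k)$, while Hölder and the mixed Sobolev inequality bound the weight integral by $C\rho(u_k)^p$. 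As $p<2\le 2\theta$ and $a,b>0$, the right-hand side is coercive in $\rho(u_k)$, forcing boundedness.

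Passing to a subsequence, $u_k\rightharpoonup u$ in $\mathcal{X}^{1,2}(\Omega)$, $u_k\to u$ in $L^r(\Omega)$ for every $r\in[1,2^*)$ and a.e., and $\rho(u_k)^2\to A$ for some $A\ge0$. With $v_k:=u_k-u$, the Brezis--Lieb lemma gives $\int_\Omega|u_k|^{2^*}=\int_\Omega|u|^{2^*}+\int_\Omega|v_k|^{2^*}+o(1)$ and weak convergence gives $\rho(u_k)^2=\rho(u)^2+\rho(v_k)^2+o(1)$; set $b_\infty=\lim\rho(v_k)^2$ and $d_\infty=\lim\int_\Omega|v_k|^{2^*}$, so $A=\rho(u)^2+b_\infty$. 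Letting $k\to\infty$ in $\langle J'_\lambda(u_k),\phi\rangle\to0$ (continuity of $M$ so that $M(\rho(u_k)^2)\to M(A)$, weak convergence for the $\rho$-pairing, and compactness of the subcritical terms) yields the limit identity $M(A)\langle u,\phi\rangle_\rho=\lambda\int_\Omega f|u|^{p-2}u\phi\,dx+\int_\Omega|u|^{2^*-2}u\phi\,dx$, and in particular, taking $\phi=u$,
\[
M(A)\rho(u)^2=\lambda\int_\Omega f|u|^p\,dx+\int_\Omega|u|^{2^*}\,dx.
\]

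From $\langle J'_\lambda(u_k),u_k\rangle\to0$ together with the same splitting I get $M(A)A=\lambda\int_\Omega f|u|^p+\int_\Omega|u|^{2^*}+d_\infty$; subtracting the $\phi=u$ identity leaves the clean relation $M(A)\,b_\infty=d_\infty$. Since $d_\infty\le S_{N,s}(\Omega)^{-2^*/2}b_\infty^{2^*/2}$ and $M(A)\ge a>0$, the assumption $b_\infty>0$ forces $b_\infty\ge a^{(N-2)/2}S_{N,s}(\Omega)^{N/2}$, hence $\frac{a}{N}b_\infty\ge\frac{1}{N}(aS_{N,s}(\Omega))^{N/2}$. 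Finally, letting $k\to\infty$ in the first-step identity gives $c=\frac{a}{N}A+b\big(\frac{1}{2\theta}-\frac{1}{2^*}\big)A^\theta-\lambda\big(\frac{1}{p}-\frac{1}{2^*}\big)\int_\Omega f|u|^p\,dx$. Using $A=\rho(u)^2+b_\infty$ termwise (so $A^\theta\ge\rho(u)^{2\theta}$ and $\frac{a}{N}A\ge\frac{a}{N}\rho(u)^2+\frac{1}{N}(aS_{N,s}(\Omega))^{N/2}$), discarding the nonnegative contributions of $u$, and bounding the weight term below by Hölder--Sobolev, I obtain $c\ge\frac{1}{N}(aS_{N,s}(\Omega))^{N/2}+\min_{R\ge0}g_0(R)$ with $g_0(R)=b\big(\frac{1}{2\theta}-\frac{1}{2^*}\big)R^\theta-\lambda\big(\frac{1}{p}-\frac{1}{2^*}\big)S_{N,s}(\Omega)^{-p/2}\|f\|_{L^{2^*/(2^*-p)}(\Omega)}R^{p/2}$. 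An elementary one-variable minimization (legitimate because $\theta\ge1>p/2$) shows $\min_{R\ge0}g_0$ equals exactly the negative term in the definition of $c_\lambda$, whence $c\ge c_\lambda$, contradicting $c<c_\lambda$. Therefore $b_\infty=0$, i.e. $\rho(v_k)\to0$ and $u_k\to u$ strongly in $\mathcal{X}^{1,2}(\Omega)$.

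The main obstacle is the Kirchhoff coupling: because $M(\rho(u_k)^2)\to M(A)$ with $A$ a priori strictly larger than $\rho(u)^2$, the weak limit $u$ need not solve \eqref{p} until strong convergence is established. The device that resolves this is to carry $A$ as an unknown throughout and to isolate the identity $M(A)\,b_\infty=d_\infty$ tying the two concentration defects together; combined with $M(A)\ge a>0$ and the non-attainment $S_{N,s}(\Omega)=S_N$, this is precisely what pins $b_\infty$ below by a definite amount, makes the threshold $c_\lambda$ emerge, and drives the contradiction.
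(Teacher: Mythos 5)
Your proof is correct and follows essentially the same route as the paper's: Brezis--Lieb splitting of $\rho(u_k)^2$ and the $L^{2^*}$ norm, the defect identity $M(A)\,b_\infty=d_\infty$ (which is the paper's \eqref{eqn1} with $A=\mu^2$), the Sobolev lower bound forcing $b_\infty\geq a^{(N-2)/2}S_{N,s}(\Omega)^{N/2}$ (the paper's \eqref{e4} in the variable $\mu^2$), and the lower bound on $c$ obtained from $J_\lambda(u_k)-\tfrac{1}{2^*}\langle J'_\lambda(u_k),u_k\rangle$ combined with the one-variable minimization whose minimum is exactly the negative term in $c_\lambda$. The only differences are cosmetic --- you subtract the weak-limit equation instead of testing with $u_k-u_0$, and you prove boundedness of the sequence directly from the Palais--Smale conditions, which is in fact slightly more careful than the paper's appeal to coercivity on $\mathcal{N}_\lambda$.
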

	\begin{proof}
		Let $\{u_k\}$ be  a  sequence of bounded functions in $\mathcal{X}^{1,2}(\Omega)$ (as $J_\lambda$ is bounded below and coercive in $\mathcal{N}_\lambda$). { Then, by using compact embedding of $\mathcal{X}^{1,2}(\Omega)\hookrightarrow L^r(\Omega)$ for $r\in [1,2^*)$, there exists $u_0\in \mathcal{X}^{1,2}(\Omega)$) such that,  upto  a subsequence again denoting by $\{u_k\}$,  $u_k\rightharpoonup u_0 $  in $\mathcal{X}^{1,2}(\Omega), \rho(u_k)\to \mu$, $u_k\to u_0$ in $L^r(\Omega)$ for $r\in [1,2^*)$}, $u_k(x)\to u_0(x)$ a.e. in $\Omega$. If $\mu=0$, then it follows that $u_k\to 0$ in $\mathcal{X}^{1,2}(\Omega)$. Thus assume $\mu>0$. By the Brezis-Lieb lemma \cite{lieb}, we have 
		\begin{align}\label{blib}
		&\rho(u_k)^2=\rho(u_k-u_0)^2+\rho(u_0)^2+o_k(1),\nonumber\\
		\int_\Omega &|u_k|^{2^*}dx=\int_\Omega |u_k-u_0|^{2^*}dx+\int_\Omega |u_0|^{2^*}dx+o_n(1).
		\end{align}
		Testing \eqref{def} with $(u_k-u_0)$ and using \eqref{blib}, we have 
		\begin{align*}
			o_k(1)&=(a+b(\rho(u_k)^{2\theta-2}))\bigg(\int_{\mathbb{R}^{n}}\nabla u_k \nabla (u_k-u_0)dx\\
				&\quad+\int\int_{\mathbb{R}^{2N}}\frac{(u_k(x)-u_k(y))((u_k-u_0)(x)-(u_k-u_0)(y))}{|x-y|^{N+2s}}dxdy\bigg)\\
			&\quad-{\lambda\int_\Omega f |u_k|^{p-2}u_k(u_k-u_0)dx-\int_\Omega  |u_k|^{2^*-2}u_k(u_k-u_0)dx}\\
			&=(a+b\mu^{2\theta-2}){(\mu^2-\rho(u_0)^2)}-\lambda\int_\Omega f |u_k|^{p-2}u_k(u_k-u_0)-\int_\Omega  |u_k|^{2^*}dx+\int_\Omega  u_0^{2^*}dx\\
			&=(a+b\mu^{2\theta-2})\lim_{k\to \infty}\rho(u_k-u_0)^2-\lambda\int_\Omega f |u_k|^{p-2}u_k(u_k-u_0)dx-\int_\Omega  |u_k-u_0|^{2^*}dx. 
		\end{align*}Therefore, 
		\begin{equation}\label{e1}
			(a+b\mu^{2\theta-2})\lim_{k\to \infty}\rho(u_k-u_0)^2=\lambda\lim_{k\to \infty}\int_\Omega f |u_k|^{p-2}u_k(u_k-u_0)dx+\lim_{k\to \infty}\int_\Omega  |u_k-u_0|^{2^*}dx.
		\end{equation}
	Applying Lebesgue dominated convergence theorem, we have 
		\begin{equation}\label{eqn1}
			{(a+b\mu^{2\theta-2})\lim_{k\to \infty}\rho(u_k-u_0)^2= l^{2^*},}
		\end{equation} where, we denote  $\lim_{k\to \infty}\int_\Omega  |u_k-u_0|^{2^*}dx=l^{2^*}$. From the \eqref{e1} we can conclude that $l\geq 0$. If $l=0$, we have $u_k\to u_0$ and we are done in this case. Suppose the case when $l>0$. From, \eqref{bestSc}, we get
		\begin{equation}\label{enq2}
			\rho(u_k-u_0)^2\geq S_{N,s}(\Omega) l^{2}.
		\end{equation} {Using \eqref{eqn1} and \eqref{enq2}, we get}
		\begin{equation}\label{e2}
			l^{2^*-2}\geq S_{N,s}(\Omega)(a+b\mu^{2\theta-2}).
		\end{equation} Also, note that from \eqref{eqn1}, we have
		\begin{equation}\label{e3}
		(a+b\mu^{2\theta-2})(\mu^2-\rho(u_0)^2)\leq l^{2^*}.
		\end{equation} Using \eqref{e2} and \eqref{e3}, we obtain that
		\begin{equation}\label{e4}
			\mu^2\geq S_{N,s}(\Omega)^\frac{N}{2}a^\frac{2}{2^*-2}.
		\end{equation}
		For any $\phi\in \mathcal{X}^{1,2}(\Omega)$, denoting 
		\begin{align*}
		H(u_k,\phi)&=(a+b\rho(u_k)^{2\theta-2})\left(\int_{\mathbb{R}^N}\nabla u_k\nabla\phi dx+\int\int_{\mathbb{R}^{2N}}\frac{(u(x)-u(y))(\phi(x)-\phi(y))}{|x-y|^{N+2s}}dxdy\right)\\
		&\quad-\lambda\int_\Omega f |u_k|^{p-2}u_k\phi dx-\int_\Omega |u_k|^{2^*-2}u_k\phi dx.
		\end{align*}
		 Using Hölder inequality, we have
		\begin{align}\label{fs}
			J_\lambda(u_k)-\frac{1}{2^*}H(u_k,u_k)&=\left(\frac{1}{2}-\frac{1}{2^*}\right)a\rho(u_k)^2+\left(\frac{1}{2\theta}-\frac{1}{2^*}\right)b\rho(u_k)^{2\theta}-\lambda\left(\frac{1}{p}-\frac{1}{2^*}\right)\int_\Omega f|u_k|^{p}dx\nonumber\\
			&\geq \left(\frac{1}{2}-\frac{1}{2^*}\right)a\rho(u_k)^2+\left(\frac{1}{2\theta}-\frac{1}{2^*}\right)b\rho(u_k)^{2\theta}\\
			&\quad -\lambda\left(\frac{1}{p}-\frac{1}{2^*}\right) S_{N,s}(\Omega)^\frac{-p}{2}\|f\|_{L^\frac{2^*}{2^*-p}(\Omega)}\rho(u_k)^p.
		\end{align} Now define
		\[F_b(t)=\left(\frac{1}{2\theta}-\frac{1}{2^*}\right)b t^{2\theta}-\lambda\left(\frac{1}{p}-\frac{1}{2^*}\right)S_{N,s}(\Omega)^{\frac{-p}{2}}\|f\|_{L^\frac{2^*}{2^*-p}(\Omega)}t^p,\] then a simple computation implies that
		\begin{equation*}
		F_b(t)\geq -\lambda^\frac{2\theta}{2\theta-p}\left(\frac{2\theta-p}{2^*p 2\theta}\right)\frac{\left((2^*-p)S_{N,s}(\Omega)^\frac{-p}{2}\|f\|_{L^\frac{2^*}{2^*-p}(\Omega)}\right)^\frac{2\theta}{2\theta-p}}{((2^*-2\theta)b)^\frac{p}{2\theta-p}}.
		\end{equation*}
	Taking limit $k\to \infty$ \eqref{fs}, together with  \eqref{psc},\eqref{e4}, we get
		\[c\geq \frac{1}{N}(aS_{N,s}(\Omega))^\frac{N}{2}-\lambda^\frac{2\theta}{2\theta-p}\left(\frac{2\theta-p}{2^*p 2\theta}\right)\frac{\left((2^*-p)S_{N,s}(\Omega)^\frac{-p}{2}\|f\|_{L^\frac{2^*}{2^*-p}(\Omega)}\right)^\frac{2\theta}{2\theta-p}}{((2^*-2\theta)b)^\frac{p}{2\theta-p}},  \] which gives  a contradiction to the assumption. This completes the proof.
	\end{proof}

	\section{Existence of first solution in $\mathcal{N}^+_\lambda$}\label{sec4}
In this section, using the standard minimization argument, we prove  the existence of  first solution of the problem \eqref{p} in $\mathcal{N}^+_{\lambda}$. We start by proving  that the energy level is negative  in $\mathcal{N}^+_{\lambda}$.
	\begin{lem}\label{neg} 
	For $\lambda>0$, we have that $J_\lambda^+=\inf \{J_{\lambda}(u):u\in \mathcal{N}_\lambda^+\}<0$.
\end{lem}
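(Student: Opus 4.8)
The plan is to prove the statement not by analysing the infimum directly, but by exhibiting one element of $\mathcal{N}^+_\lambda$ at which $J_\lambda$ is strictly negative: since the infimum of a set of reals is bounded above by any of its members, a single point $u\in\mathcal{N}^+_\lambda$ with $J_\lambda(u)<0$ already gives $J_\lambda^+\le J_\lambda(u)<0$. By Remark \ref{emptya} the set $\mathcal{N}^+_\lambda$ is nonempty for every $\lambda>0$, so it suffices to show that $J_\lambda(u)<0$ for \emph{every} $u\in\mathcal{N}^+_\lambda$, and then pick any such $u$.

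First I would record two identities obtained by feeding the Nehari constraint $a\rho(u)^2+b\rho(u)^{2\theta}=\lambda\int_\Omega f|u|^p\,dx+\int_\Omega|u|^{2^*}\,dx$ into the functional. Eliminating $\int_\Omega|u|^{2^*}\,dx$ yields
\[J_\lambda(u)=a\left(\frac12-\frac1{2^*}\right)\rho(u)^2+b\left(\frac1{2\theta}-\frac1{2^*}\right)\rho(u)^{2\theta}-\lambda\left(\frac1p-\frac1{2^*}\right)\int_\Omega f|u|^p\,dx.\]
Next, the defining inequality $m''_{\lambda,u}(1)>0$ of $\mathcal{N}^+_\lambda$, treated the same way, becomes
\[\lambda(2^*-p)\int_\Omega f|u|^p\,dx>a(2^*-2)\rho(u)^2+b(2^*-2\theta)\rho(u)^{2\theta}.\]
Because $a,b>0$, $2^*>2$ and $2\theta<2^*$ (recall $\theta<2^*/2$), the right-hand side is strictly positive, so in particular $\int_\Omega f|u|^p\,dx>0$ for every $u\in\mathcal{N}^+_\lambda$; this is the only place where the sign-changing nature of $f$ needs to be controlled.

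The heart of the argument is to combine the two displays. Substituting the lower bound for $\lambda\int_\Omega f|u|^p\,dx$ into the expression for $J_\lambda(u)$ (legitimate since $\tfrac1p-\tfrac1{2^*}>0$) and using $(\tfrac1p-\tfrac1{2^*})/(2^*-p)=\tfrac1{p2^*}$, the coefficients of $a\rho(u)^2$ and $b\rho(u)^{2\theta}$ collapse to $\tfrac{2^*-2}{2^*}(\tfrac12-\tfrac1p)$ and $\tfrac{2^*-2\theta}{2^*}(\tfrac1{2\theta}-\tfrac1p)$ respectively. Since $1<p<2\le 2\theta$, both factors $\tfrac12-\tfrac1p$ and $\tfrac1{2\theta}-\tfrac1p$ are negative, so $J_\lambda(u)<0$ for all $u\in\mathcal{N}^+_\lambda$, and the claim follows.

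A cleaner route to quote, which avoids the coefficient bookkeeping, is the fibering-map picture: having established $\int_\Omega f|u|^p\,dx>0$ on $\mathcal{N}^+_\lambda$, Proposition \ref{fiber}(a) applies, and since $m''_{\lambda,u}(1)>0$ the point $t=1$ must be the unique local minimum $t^+(u)$; as $m_{\lambda,u}$ is decreasing on $(0,t^+(u)]$ with $m_{\lambda,u}(0)=J_\lambda(0)=0$, one gets $J_\lambda(u)=m_{\lambda,u}(1)<0$ immediately. I do not expect any genuine obstacle here — the computation is elementary — the only step requiring attention is verifying that $\int_\Omega f|u|^p\,dx>0$ is automatic on $\mathcal{N}^+_\lambda$, which the second display settles and which legitimises the use of Proposition \ref{fiber}(a).
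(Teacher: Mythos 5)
Your proposal is correct and takes essentially the same route as the paper: the paper likewise writes the Nehari identity $J_\lambda(u)=a\left(\tfrac12-\tfrac1{2^*}\right)\rho(u)^2+b\left(\tfrac1{2\theta}-\tfrac1{2^*}\right)\rho(u)^{2\theta}-\lambda\left(\tfrac1p-\tfrac1{2^*}\right)\int_\Omega f|u|^p\,dx$, inserts the $\mathcal{N}^+_\lambda$ inequality $\lambda(2^*-p)\int_\Omega f|u|^p\,dx>a(2^*-2)\rho(u)^2+b(2^*-2\theta)\rho(u)^{2\theta}$, and collapses the coefficients to $\tfrac{2^*-2}{2^*}\left(\tfrac12-\tfrac1p\right)$ and $\tfrac{2^*-2\theta}{2^*}\left(\tfrac1{2\theta}-\tfrac1p\right)$, both negative since $1<p<2\le 2\theta$. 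Your explicit remarks that $\mathcal{N}^+_\lambda\neq\emptyset$ for all $\lambda>0$ and that $\int_\Omega f|u|^p\,dx>0$ automatically on $\mathcal{N}^+_\lambda$ are sound and even tidy up the paper's slightly garbled opening line (which writes the inclusion $\mathcal{N}_\lambda\subset\mathcal{N}^+_\lambda$ backwards), while your alternative fibering-map argument is a valid reformulation of the same fact rather than a genuinely different proof.
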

\begin{proof}
	Let $0\neq u\in\mathcal{N}_\lambda\subset\mathcal{N}^+_{\lambda}$, we have 
	\begin{align*}
		J_\lambda(u)&=\left(\frac{1}{2}-\frac{1}{2^*}\right)a\rho(u)^2+\left(\frac{1}{2\theta}-\frac{1}{2^*}\right)b \rho(u)^{2\theta}-\lambda\left(\frac{1}{p}-\frac{1}{2^*}\right)\int_\Omega f |u|^pdx\\
		&<\left(\frac{1}{2}-\frac{1}{2^*}\right)a\rho(u)^2+\left(\frac{1}{2\theta}-\frac{1}{2^*}\right)b \rho(u)^{2\theta}\\
		&\quad-\left(\frac{1}{p}-\frac{1}{2^*}\right)\left(\frac{2^*-2}{2^*-p}a\rho(u)^2+\frac{2^*-2\theta}{2^*-p} b\rho(u)^{2\theta}\right)\\
		&=\frac{2^*-2}{2^*}\left(\frac{1}{2}-\frac{1}{p}\right)\rho(u)^2+\frac{2^*-2\theta}{2^*}\left(\frac{1}{2\theta}-\frac{1}{p}\right)b\rho(u)^{2\theta}<0,
	\end{align*} and as $J(0)=0$, we get $m^+_{\lambda}<0$.
\end{proof} Choose $\Lambda_2>0$ such that $\Lambda_2^\frac{2\theta}{2\theta-p}\left(\frac{2\theta-p}{2^*p 2\theta}\right)\frac{\left((2^*-p)S_{N,s}(\Omega)^\frac{-p}{2}\|f\|_{L^\frac{2^*}{2^*-p}(\Omega)}\right)^\frac{2\theta}{2\theta-p}}{((2^*-2\theta)b)^\frac{p}{2\theta-p}}\leq \frac{1}{N}(aS_{N,s}(\Omega))^\frac{N}{2}$.

	\textit{Proof of Theorem \ref{t1}:}
		Considering $\Lambda_0=\min\{\Lambda_1,\Lambda_2\}$ and using Proposition 
		\ref{pscseq}, we get a minimizing Palais-Smale sequence $\{u_k\}\subset\mathcal{N}^+_{\lambda}$ such that $\{u_k\}$ is bounded in $\mathcal{X}^{1,2}(\Omega)$ and $J_\lambda(u_k)\to m_{\lambda}^+$. Also, from Lemma \ref{neg}, we have $J_\lambda^+<0$. In view of the Proposition \ref{cpt}, there exists $u_0\in \mathcal{X}^{1,2}(\Omega)$ such that $u_k\to u_0$ in $\mathcal{X}^{1,2}(\Omega)$. Thus, $u_0$ is a minimizer of $J_\lambda$ in $\mathcal{N}_\lambda$ for all $\lambda\in(0,\Lambda_1)$ since $J_\lambda(u_0)<0$, we have that $u_0\not\equiv0$. Next, we claim that $u_0\in \mathcal{N}^+_\lambda$. If not then $u_0\in \mathcal{N}^0_{\lambda}$ which is not possible as $\mathcal{N}^0_\lambda=\emptyset$ for all $\lambda\in (0,\Lambda_0)$ (see, remark \ref{emptya}). Therefore, $u_0\in \mathcal{N}^+_{\lambda}$. Using Lemma  \ref{soln}, we can conclude that the obtained minimizer $u_0\in \mathcal{N}_\lambda^+$ is a critical point and equivalently the solution of the problem \eqref{p}. 
		
		Next, it remain to show that obtained solution is non-negative. It is required to be verified due  to presence of nonlocal fractional operator, we have $\rho(u)\neq \rho(|u|)$ in $\mathcal{X}^{1,2}(\Omega)$ and thus $J_\lambda(u)\neq J_\lambda(|u|)$. To overcome this difficulty, we consider positive part of the  problem and the  corresponding energy functional as follows 
		\[J^+_{\lambda}(u)=\frac{1}{2}\hat{M}(\rho(u)^2)-\lambda\frac{1}{p}\int_\Omega f (u^+)^p dx-\frac{1}{2^*}\int_\Omega (u^+)^{2^*}dx,\] where $u^+:=\max\{u,0\}$ is the positive part of $u$.  Then, it is easy to see that the critical points of $J_\lambda$ are also critical points of $J_\lambda^+$. Thus, 
		\begin{align}\label{positive}
		M(\rho( u_0)^2)&\left(\int_{\mathbb{R}^N}\nabla u_0\cdot \nabla \phi dx+\int\int_{\mathbb{R}^{2N}}\frac{(u_0(x)-u_0(y))(\phi(x)-\phi(y))}{|x-y|^{N+2s}}dxdy\right)\nonumber\\
		&=\lambda \int_\Omega f{(u_0^+)}^{p-1}\phi dx+\int_\Omega (u_0^+)^{2^*-1}\phi dx\end{align} for all $\phi\in \mathcal{X}^{1,2}(\Omega)$. Testing \eqref{positive} by $\phi=u^{-}$ and using the inequality
		\begin{align*}
		(u_0(x)-u_0(y))(u_0^-(x)-u_0^-(y))&=-u_0^+(x)u_0^-(y)-u_0^-(x)u_0^+(y)-(u_0^-(x)-u_0^-(y))^2\\
		&\leq -|u_0^-(x)-u_0^-(y)|^2,
		\end{align*}
		we have $\rho(u_0^-)=0$, thus $u_0$ is nonnegative solution of the problem \eqref{p}.  This completes the proof.

	\section{Existence of Second solution in $\mathcal{N}^-_\lambda$}\label{sec5}
	
	To show the existence of the second solution to the problem \eqref{p}, we have followed the idea and the estimates from \cite{tfwu,biagi1}. Consider the test function $\eta\in C^\infty_0(\Omega)$ such that 
	\[0\leq \eta(x)\leq 1, \ \textrm{in} \  \Omega, \ \eta_\frac{\rho}{2}(x)=1, \ \textrm{in }\ B_\frac{\rho}{2}(0), \ \textrm{and} \ \eta_\frac{\rho}{2}(x)=0 \ \textrm{in }\ (B_\frac{\rho}{2}(0))^c, \] for $\rho$ sufficiently small. For $\epsilon>0$, let
	\[u_{\epsilon}(x)=\frac{\epsilon^\frac{(N-2)}{2}}{\left(|x|^2+\epsilon^2\right)^\frac{N-2}{2}}, \ \textrm{and} \ u_{\epsilon,\eta}=\frac{\eta(x)u_\epsilon(x)}{\|\eta u_\epsilon\|_{L^{2^*}(\Omega)}}.\] Consider the following sets
	\[U_1=\left\{u\in \mathcal{X}^{1,2}(\Omega)\setminus\{0\}:\frac{1}{\rho(u)}t^{-}\left(\frac{u}{\rho(u)}\right)>1\right\}\cup\{0\},\] and 
		\[U_2=\left\{u\in \mathcal{X}^{1,2}(\Omega)\setminus\{0\}:\frac{1}{\rho(u)}t^{-}\left(\frac{u}{\rho(u)}\right)<1\right\},\] where $t^-$ is as in Proposition \ref{fiber}. Then $\mathcal{N}_\lambda^-=\left\{u\in \mathcal{X}^{1,2}(\Omega)\setminus\{0\}:\frac{1}{\rho(u)}t^{-}\left(\frac{u}{\rho(u)}\right)=1\right\}$ is a connected component of $U_1, U_2$. Also, one can observe that $\mathcal{N}_\lambda^+\subset U_1$ and thus, $u_0\in U_1$. Now, we provide the subsequent technical lemma. The idea of the proof is similar to \cite{tfwu}.
	\begin{lem}
		For all $\lambda\in (0,\Lambda_0)$ and $u_0$ be a local minimizer for the functional $J_\lambda$ in $\mathcal{X}^{1,2}$ obtained in Theorem \ref{t1}. Then, for any $\epsilon>0$ and $\eta$ defined above there exists $l_0>0$ such that $u_0+l_0 u_{\epsilon,\eta}\in U_2$.
	\end{lem}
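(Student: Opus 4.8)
The plan is to show that $u_0+l\,u_{\epsilon,\eta}\in U_2$ for every sufficiently large $l>0$, so that any such $l$ may serve as $l_0$. Write $w_l:=u_0+l\,u_{\epsilon,\eta}$ and consider
\[\gamma(l):=\frac{1}{\rho(w_l)}\,t^{-}\!\left(\frac{w_l}{\rho(w_l)}\right),\]
so that, by definition of $U_2$, it suffices to produce one $l$ with $\gamma(l)<1$. I would in fact prove the stronger statement $\gamma(l)\to 0$ as $l\to\infty$, controlling the two factors of $\gamma$ separately: the norm $\rho(w_l)$ blows up, while the fibering scale $t^{-}$ of the \emph{normalized} function stays bounded.

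For the first factor, expanding
\[\rho(w_l)^2=\rho(u_0)^2+2l\,\langle u_0,u_{\epsilon,\eta}\rangle_\rho+l^2\rho(u_{\epsilon,\eta})^2\]
gives $\rho(w_l)\to\infty$ and $\rho(w_l)/l\to\rho(u_{\epsilon,\eta})$. Since $w_l/l=u_0/l+u_{\epsilon,\eta}\to u_{\epsilon,\eta}$ in $\mathcal{X}^{1,2}(\Omega)$, the normalized directions converge,
\[\frac{w_l}{\rho(w_l)}=\frac{w_l/l}{\rho(w_l)/l}\longrightarrow \hat u:=\frac{u_{\epsilon,\eta}}{\rho(u_{\epsilon,\eta})}\quad\text{strongly in }\mathcal{X}^{1,2}(\Omega).\]
For the second factor, recall from Proposition \ref{fiber} that $t^{-}(v)$ is the larger root of $\Phi_v(t)=\lambda\int_\Omega f|v|^p\,dx$ and that $\Phi_v$ is strictly decreasing there, i.e. $\Phi_v'(t^{-}(v))<0$. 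Since $\int_\Omega f|\hat u|^p\,dx>0$ (which holds because $u_{\epsilon,\eta}$ is concentrated where $f$ is positive) and since $\lambda<\Lambda_0\le\lambda^*\le\lambda(\hat u)$, Remark \ref{emptya} guarantees $t^{+}(\hat u)\ne t^{-}(\hat u)$; hence $t^{-}(\hat u)$ is a finite simple root, and the implicit function theorem applied to $\Phi_v(t)-\lambda\int_\Omega f|v|^p\,dx=0$ yields $t^{-}(w_l/\rho(w_l))\to t^{-}(\hat u)<\infty$. Combining the two factors gives $\gamma(l)\to 0$, hence $\gamma(l_0)<1$ for some $l_0>0$ and $w_{l_0}\in U_2$, as required.

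The delicate point is exactly the boundedness of $t^{-}$ along the sequence: it rests on the limiting direction $\hat u$ falling in case (a) of Proposition \ref{fiber}, i.e. $\int_\Omega f|u_{\epsilon,\eta}|^p\,dx>0$, so that $t^{-}$ is defined and varies continuously. If one prefers to avoid invoking continuity of $t^{-}$, the same conclusion follows from a direct study of $\Phi_{w_l}$, using the homogeneity identity $\gamma(l)=t^{-}(w_l)$ (the larger root of $\Phi_{w_l}(t)=\lambda\int_\Omega f|w_l|^p\,dx$): the asymptotics $\Phi_{w_l}(1)=a\rho(w_l)^2+b\rho(w_l)^{2\theta}-\int_\Omega|w_l|^{2^*}\,dx\sim -l^{2^*}$ and $\lambda\int_\Omega f|w_l|^p\,dx=O(l^p)$ give $\Phi_{w_l}(1)<\lambda\int_\Omega f|w_l|^p\,dx$ for large $l$, while the rescaling $t=\sigma/l$ shows $\Phi_{w_l}$ attains its maximum at a point $t(w_l)=O(1/l)\to 0$; thus $t=1$ lies on the decreasing branch beyond the maximum, forcing the larger root to satisfy $t^{-}(w_l)<1$, which is once more $w_l\in U_2$.
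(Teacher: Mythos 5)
Your proof is correct, but note that the paper itself omits the proof of this lemma, deferring to Wu \cite{tfwu}, and the argument intended there is different from yours in its key step. The Wu-style proof shares your skeleton (write $w_l=u_0+lu_{\epsilon,\eta}$, show $\rho(w_l)\to\infty$ while the fibering scale of the normalized functions stays bounded), but it establishes the boundedness of $t^{-}\bigl(w_l/\rho(w_l)\bigr)$ by \emph{contradiction with coercivity}: if $t_k:=t^{-}(v_k)\to\infty$ along normalized $v_k=w_{l_k}/\rho(w_{l_k})$, then, since $\int_\Omega|v_k|^{2^*}dx$ stays bounded away from zero (as $v_k\to u_{\epsilon,\eta}/\rho(u_{\epsilon,\eta})$), one gets $J_\lambda(t_kv_k)\to-\infty$, contradicting the fact that $J_\lambda$ is bounded below on $\mathcal{N}_\lambda$ (Lemma \ref{coercive}); then any $l_0$ with $\rho(w_{l_0})$ exceeding the uniform bound works. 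You instead identify the limit direction $\hat u=u_{\epsilon,\eta}/\rho(u_{\epsilon,\eta})$ and obtain boundedness from \emph{continuity} of $t^{-}$ at $\hat u$ via the implicit function theorem, which is legitimate because $\Phi'_{\hat u}(t^{-}(\hat u))<0$ (indeed $\lambda<\Lambda_0\le\Lambda_1\le\lambda^*\le\lambda(\hat u)$, so case (a) of Proposition \ref{fiber} applies and the larger root is nondegenerate by Remark \ref{emptya}); this yields the stronger quantitative conclusion $\gamma(l)\to0$, at the cost of one extra check worth spelling out, namely that the IFT branch through $(t^{-}(\hat u),\hat u)$ really is the map $v\mapsto t^{-}(v)$ for nearby $v$ (immediate here, since for $\lambda<\Lambda_1$ each $v$ with $\int_\Omega f|v|^pdx>0$ has exactly two positive fibering roots and the branch stays in the region where $\Phi'_v<0$). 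Your second variant, using the $(-1)$-homogeneity identity $t^{-}(w_l)=\gamma(l)$ together with the asymptotics $\Phi_{w_l}(1)\sim-l^{2^*}$, $\lambda\int_\Omega f|w_l|^pdx=O(l^p)$ and $t_{\max}(w_l)=O(1/l)$, is even more elementary and fully self-contained: it trades the global functional-analytic input (coercivity of $J_\lambda$ on $\mathcal{N}_\lambda$) that Wu's argument needs for a direct one-variable root-location analysis. One hypothesis should be made explicit in any of these routes: you need $\int_\Omega f|u_{\epsilon,\eta}|^pdx>0$, i.e., the concentration point of $u_{\epsilon,\eta}$ must lie in $\{f>0\}$; your parenthetical assumption is exactly what the paper itself uses implicitly later (``$f>0$ in the support of $u_{\epsilon,\eta}$'' in the proof of Proposition \ref{neglevl}), so you are consistent with the paper, but in a fully rigorous write-up this should be stated as a standing assumption on $\eta$.
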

	\begin{prop}\label{neglevl}
		Let $\lambda\in(0,\Lambda_0)$ and let $u_0$ be the local minimizer achieved in last case for the functional $J_\lambda$ in $\mathcal{X}^{1,2}(\Omega)$. Then, for any $r>0$ and $\eta$ there exists $\epsilon_0=\epsilon_0(r,N)$ and $\Lambda_3>0$ such that $J_\lambda(u_0+r u_{\epsilon,\eta})<c_\lambda$ for any $\epsilon\in (0,\epsilon_0)$  and $\lambda \in (0,\min\{\lambda_0,\Lambda_3\})$.
	\end{prop}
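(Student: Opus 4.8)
\emph{Proposal.} Write $v_\epsilon:=u_{\epsilon,\eta}$ (so $\|v_\epsilon\|_{L^{2^*}(\Omega)}=1$) and study the single function $g_\epsilon(r):=J_\lambda(u_0+r\,v_\epsilon)$, the aim being $g_\epsilon(r)<c_\lambda$. The plan rests on the sharp asymptotics of the truncated, normalized Aubin--Talenti bubble, borrowed from \cite{biagi1} (and, for the local pieces, from the classical Brezis--Nirenberg estimates used in \cite{tfwu}): as $\epsilon\to0^+$,
\[
\|\nabla v_\epsilon\|_{L^2(\mathbb{R}^N)}^2=S_N+O(\epsilon^{N-2}),\qquad [v_\epsilon]_s^2\le C\,\epsilon^{2-2s},\qquad \int_\Omega u_0\,v_\epsilon^{2^*-1}\,dx\ge C\,u_0(0)\,\epsilon^{(N-2)/2},
\]
together with $\langle u_0,v_\epsilon\rangle_\rho=O(\epsilon^{(N-2)/2})$ and a subcritical concave interaction $\int_\Omega f(u_0+rv_\epsilon)^p\,dx=\int_\Omega fu_0^p\,dx+O(\epsilon^{\gamma})$ of lower order. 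Here $S_{N,s}(\Omega)=S_N$ by \cite{biagi1}, and the centre $0$ of $\eta$ is taken to be an interior point with $u_0(0)>0$ (possible since the nonnegative solution $u_0$ is strictly positive in $\Omega$ by the strong maximum principle for $\mathcal{L}$, assuming $0\in\Omega$), so the last integral is a genuinely positive quantity of size $\epsilon^{(N-2)/2}$.

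First I would expand each term of $g_\epsilon(r)$. From $\rho(u_0+rv_\epsilon)^2=\rho(u_0)^2+2r\langle u_0,v_\epsilon\rangle_\rho+r^2\rho(v_\epsilon)^2$ the bubble estimates give $\rho(u_0+rv_\epsilon)^2=\rho(u_0)^2+r^2S_N+O(\epsilon^{(N-2)/2})+O(\epsilon^{2-2s})$, where the \emph{positive} term $r^2[v_\epsilon]_s^2$ is the only source of the $O(\epsilon^{2-2s})$ contribution; linearizing $t\mapsto t^{\theta}$ then yields $\tfrac{b}{2\theta}\rho(u_0+rv_\epsilon)^{2\theta}=\tfrac{b}{2\theta}(\rho(u_0)^2+r^2S_N)^{\theta}+O\!\big(b(\epsilon^{(N-2)/2}+\epsilon^{2-2s})\big)$. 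For the critical term I would use the elementary inequality $(\alpha+\beta)^{2^*}\ge\alpha^{2^*}+\beta^{2^*}+2^*\alpha\beta^{2^*-1}$ (valid for $\alpha,\beta\ge0$), which after multiplication by $-1/2^*$ produces a \emph{negative} gain $-r^{2^*-1}\int_\Omega u_0v_\epsilon^{2^*-1}\,dx\le -C\,u_0(0)\,r^{2^*-1}\,\epsilon^{(N-2)/2}$. Using the Euler--Lagrange identity $M(\rho(u_0)^2)\langle u_0,v_\epsilon\rangle_\rho=\lambda\int_\Omega fu_0^{p-1}v_\epsilon\,dx+\int_\Omega u_0^{2^*-1}v_\epsilon\,dx$ to absorb the remaining linear interaction terms, I would arrive at
\[
g_\epsilon(r)\ \le\ E(r)-c_1\,\epsilon^{(N-2)/2}+c_2\,\epsilon^{2-2s}+o\!\big(\epsilon^{(N-2)/2}\big),\qquad c_1,c_2>0,
\]
with limiting profile $E(r)=J_\lambda(u_0)+\tfrac{a}{2}r^2S_N+\tfrac{b}{2\theta}\big[(\rho(u_0)^2+r^2S_N)^{\theta}-\rho(u_0)^{2\theta}\big]-\tfrac{r^{2^*}}{2^*}$.

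Then I would compare the pieces. Since $N+4s<6$ is exactly $\tfrac{N-2}{2}<2-2s$, for small $\epsilon$ the gain $-c_1\epsilon^{(N-2)/2}$ strictly dominates the nonlocal loss $+c_2\epsilon^{2-2s}$, so the whole correction is negative. For the profile, maximizing the $b=0$ part gives $\tfrac{a}{2}r^2S_N-\tfrac{r^{2^*}}{2^*}\le\tfrac1N(aS_N)^{N/2}$, while the Kirchhoff bracket is nonnegative and only $O(b)$; hence $E(r)\le J_\lambda(u_0)+\tfrac1N(aS_N)^{N/2}+O(b)$. Recalling $c_\lambda=\tfrac1N(aS_{N,s}(\Omega))^{N/2}-D\lambda^{2\theta/(2\theta-p)}$ with $D>0$, and $J_\lambda(u_0)=J_\lambda^+<0$, the inequality $g_\epsilon(r)<c_\lambda$ reduces to
\[
J_\lambda(u_0)+O(b)-c_1\epsilon^{(N-2)/2}+c_2\epsilon^{2-2s}+o\!\big(\epsilon^{(N-2)/2}\big)\ <\ -D\lambda^{2\theta/(2\theta-p)} .
\]
For the fixed $r$ this is secured by choosing the parameters in order: first $\epsilon<\epsilon_0(r,N)$ so that the correction is negative with a definite margin; then $b$ small enough (this is where the smallness of $b$ in \eqref{mt} enters) that the Kirchhoff excess is negligible against that margin; and finally $\lambda<\Lambda_3$ small enough that the concave slack $D\lambda^{2\theta/(2\theta-p)}$ is absorbed, using also that $E(r)<\tfrac1N(aS_N)^{N/2}$ strictly whenever $r$ differs from the unique maximizer of the profile.

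The principal obstacle is exactly this competition of powers of $\epsilon$. The favorable interaction term is of order $\epsilon^{(N-2)/2}$ and is genuinely negative only because $u_0(0)>0$, whereas the nonlocal seminorm $[v_\epsilon]_s^2$ forces an \emph{unavoidable} positive loss of order $\epsilon^{2-2s}$ (a feature absent in the purely local problem); demanding that the former beat the latter is precisely what pins down $N+4s<6$, i.e. $N\in\{3,4,5\}$ with a ceiling on $s$. Two further technical points need care: the Kirchhoff term is not additive, so it must be linearized and its $O(b)$ excess in $\max_r E(r)$ controlled by small $b$; and, since $\theta>1$ makes the concave slack $\lambda^{2\theta/(2\theta-p)}$ decay more slowly than $|J_\lambda(u_0)|\sim\lambda^{2/(2-p)}$, the gap in $c_\lambda$ must be closed jointly by the $\epsilon$-gain and the smallness of $\lambda$.
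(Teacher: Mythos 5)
Your proposal is correct and follows essentially the same route as the paper's proof: expand $J_\lambda$ along $u_0+r\,u_{\epsilon,\eta}$, cancel the linear cross terms via the equation for $u_0$, extract the favorable term of order $-\epsilon^{(N-2)/2}$ from the elementary convexity inequality for the critical power, use the estimates of \cite{biagi1} so that $\rho(u_{\epsilon,\eta})^2=S_{N,s}(\Omega)+O(\epsilon^{\min\{N-2,\,2-2s\}})$ with the fractional seminorm costing $O(\epsilon^{2-2s})$, note that $N+4s<6$ is exactly $(N-2)/2<2-2s$ so the gain dominates, and finally choose $\epsilon$, then $b$, then $\Lambda_3$ so that the margin absorbs the $\lambda$-dependent deficit in $c_\lambda$. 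The only cosmetic deviations are that the paper controls the Kirchhoff power by a Young-type splitting \eqref{inq1}--\eqref{inq2} and quantifies the smallness of $b$ by coupling $b=\epsilon^q$ with $q>N-2$ (where you linearize $t\mapsto t^\theta$ and keep $b$ as an independent small parameter), and it discards the concave term by a sign argument ($f>0$ on $\supp u_{\epsilon,\eta}$ together with convexity of $t\mapsto t^p$) rather than your lower-order $O(\epsilon^\gamma)$ expansion; your explicit appeal to $u_0(0)>0$ in fact makes transparent a positivity assumption the paper uses only implicitly.
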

	
	\begin{proof} We have
		\begin{align}
			J_\lambda(u_0+r&u_{\epsilon,\eta})=\frac{a}{2}\left(\int_{\mathbb{R}^N}|\nabla u_0|^2+\int\int_{\mathbb{R}^{2N}}\frac{|u_0(x)-u_0(y)|^2}{|x-y|^{N+2s}}dxdy\right)+\frac{b}{2\theta}r\bigg(\int_{\mathbb{R}^N}\nabla u_0\nabla u_{\epsilon,\eta}dx\nonumber\\
			&+\int\int_{\mathbb{R}^{2N}}\frac{(u_0(x)-u_0(y))(u_{\epsilon,\eta}(x)-u_{\epsilon,\eta}(y))}{|x-y|^{N+2s}}dxdy\bigg)\nonumber\\
			&+r^2\left(\int_\Omega|\nabla u_{\epsilon,\eta}|^2dx+\int\int_{\mathbb{R}^{2N}}\frac{|u_{\epsilon,\eta}(x)-u_{\epsilon,\eta}(y)|}{|x-y|^{N+2s}}dxdy\right)\nonumber\\
			&=\frac{a}{2}\rho(u_0)^2+\frac{a}{2}r^2\rho(u_{\epsilon,\eta})^2+ar\bigg(\int_{\mathbb{R}^N}\nabla u_0\nabla u_{\epsilon,\eta}dx\\
			&\quad+\int\int_{\mathbb{R}^{2N}}\frac{(u_0(x)-u_0(y))(u_{\epsilon,\eta}(x)-u_{\epsilon,\eta}(y))}{|x-y|^{N+2s}}dxdy\bigg)\nonumber\\
			&+\frac{b}{2\theta}\left(\rho(u_0)^2+r^2\rho(u_{\epsilon,\eta})^2+2r\int\int_{\mathbb{R}^{2N}}\frac{(u_0(x)-u_0(y))(u_{\epsilon,\eta}(x)-u_{\epsilon,\eta}(y))}{|x-y|^{N+2s}}dxdy\right)^\theta\nonumber\\
			&-\frac{\lambda}{p}\int_\Omega f|u_0+ru_{\epsilon,\eta}|^pdx-\frac{1}{2^*}\int_\Omega |u_0+ru_{\epsilon,\eta}|^{2^*}dx. \nonumber
		\end{align}  In order to estimate terms in the above expression, we will using following inequalities
		\begin{equation}\label{inq1}
		(\alpha+\beta)^{\theta}\leq 2^{\theta-1}(\alpha^\theta+\beta^\theta)\leq \alpha^{\theta} +C_\theta(\alpha^\theta+\beta^\theta)+\theta\alpha^{\theta-1}\beta,   \ \ \textrm{for} \ \textrm{any} \ \alpha,\beta\geq 0, \theta\geq 1, 
		\end{equation}
 \begin{equation}\label{inq3}
	(\alpha+\beta)^{p}-\alpha^p-\beta^p-p\alpha^{p-1}\beta\geq C_1\alpha\beta^{p-1},  \ \ \textrm{for} \ \textrm{any} \ \alpha,\beta\geq 0,  p>2, 
	\end{equation} where $C_\theta>0$ depending on $\theta$ and $C_1>0$.
		 Using Young's inequity, for some $D_\theta>0$, we have
		\begin{align}\label{inq2}
			\frac{b}{2\theta}&\left(\rho(u_0+ru_{\epsilon,\eta})^{2\theta}\right)\leq \frac{b}{2\theta}\rho(u_0)^{2\theta}+b C_\theta(\rho(u_0))^{2\theta}+b D_\theta r^{2\theta}(\rho(u_{\epsilon,\eta}))^{2\theta}\nonumber\\
			&+b(\rho(u_0))^{2\theta-2} r \left(\int_{\mathbb{R}^N}\nabla u_0\nabla u_{\epsilon,\eta}dx+\int\int_{\mathbb{R}^{2N}}\frac{(u_0(x)-u_0(y))(u_{\epsilon,\eta}(x)-u_{\epsilon,\eta}(y))}{|x-y|^{N+2s}}dxdy\right).
		\end{align} 
		From \eqref{inq2}, we have 
		\begin{align*}
			J_\lambda(u_0+ru_{\epsilon,\eta})&\leq J_\lambda(u_0)+\frac{a}{2}r^2(\rho(u_{\epsilon,\eta}))^2+b C_\theta(\rho(u_0))^{2\theta}+b D_\theta r^{2\theta}(\rho(u_{\epsilon,\eta}))^{2\theta}\\
			&\quad- \frac{\lambda}{p} \int_\Omega f \left((u_0+ru_{\epsilon,\eta}^p)dx-u_0^p-pru_0^{p-1}u_{\epsilon,\eta}\right)\\
			&\quad-\frac{1}{2^*}\int_\Omega  \left((u_0+ru_{\epsilon,\eta}^{2^*})dx-u_0^{2^*}-{2^*}ru_0^{2^*-1}u_{\epsilon,\eta}\right).
		\end{align*}Denote, $\rho(u_0)=R$.  Using  $f>0$ in the support of $u_{\epsilon,\eta}$ and the inequality \eqref{inq1},\eqref{inq3}, we can conclude that
		\begin{align}
		J_\lambda(u_0+ru_{\epsilon,\eta})&\leq J_\lambda(u_0)+\frac{a}{2}r^2(\rho(u_{\epsilon,\eta}))^2+b C_\theta R^{2\theta}+b D_\theta r^{2\theta}(\rho(u_{\epsilon,\eta}))^{2\theta}\nonumber\\
		&-\frac{1}{2^*}r^{2^*}\int_\Omega  u_{\epsilon,\eta}^{2^*}dx-C_1 r^{2^*-1}\int_\Omega  u_{\epsilon,\eta}^{2^*-1}dx.
		\end{align}Considering the estimates from \cite{biagi1} and taking $b=\epsilon^q$ {with $q>N-2$}, we get
		\begin{equation}
			J_{\lambda}(u_0+ru_{\epsilon,\eta})\leq \frac{a}{2} r^2(S_{N,s}(\Omega)+O(\epsilon^{k_{s,N}}))+C_2 \epsilon^q+C_3 \epsilon^{N-2}-\frac{r^{2^*}}{2^*}-C_4 r^{2^*-1} \epsilon^{(N-2)/2}
		\end{equation}  where $k_{s,N}=\min\{N-2,2-2s\}$. Next,   define
		\[G(t)=\frac{a}{2} t^2(S_{N,s}(\Omega)+O(\epsilon^{k_{s,N}}))-\frac{t^{2^*}}{2^*}-C_4 t^{2^*-1} \epsilon^{(N-2)/2}.\]
			{Observe that $G(t)\to -\infty$ as $t\to \infty$ and $G(t)\to 0^+$ as $t\to 0^+$, thus there exists $t_\epsilon$ such that $\frac{d}{dt}G(t)|_{t=t_\epsilon}=0$ and $G(t_\epsilon)=\sup_{t\geq 0} G(t)$. Also $G'(t_\epsilon)=0$ implies there exists $\nu>0$ such that $t_\epsilon\geq \nu>0$.} Using the fact $\inf J_\lambda=m^+_{\lambda}<0$ in $\mathcal{N}^+_{\lambda}$, we get
	\begin{align*}
		J_\lambda(u_0+r u_{\epsilon})&\leq \frac{a}{2} t^2(S_{N,s}(\Omega)+O(\epsilon^{k_{s,N}}))-\frac{t^{2^*}}{2^*}-C_4 t^{2^*-1} \epsilon^{(N-2)/2}+C_5 \epsilon^{N-2}\nonumber\\
		&\leq  \frac{a}{2} t^2(S_{N,s}(\Omega)+O(\epsilon^{k_{s,N}}))-\frac{t^{2^*}}{2^*}+C_5 \epsilon^{N-2}-C_6  \epsilon^{(N-2)/2}\noindent,
	\end{align*}  where $C_5,C_6>0$ are positive constants independent of $\epsilon,\lambda$. Since, the map $t\to \frac{a}{2} t^2(S_{N,s}(\Omega)+O(\epsilon^{k_{s,N}}))-\frac{t^{2^*}}{2^*}$ is increasing in $[0, (a(S_{N,s}(\Omega)+C\epsilon^{k_{s,N}}))^\frac{1}{2^*-2})$, we have 
	\begin{align*}
	J_\lambda(u_0+ru_\epsilon)&\leq \left(\frac{1}{2}-\frac{1}{2^*}\right)(a(S_{N,s}(\Omega)+O(\epsilon^{k_{s,N}})))^{\frac{2^*}{2^*-2}}+C_5 \epsilon^{N-2}-C_6 \epsilon^{(N-2)/2}\\
	&\leq \frac{1}{N}(a S_{N,s}(\Omega))^\frac{N}{2}+C_7\epsilon^{\min\{N-2,2-2s\}}-C_5\epsilon^\frac{N-2}{2},
	\end{align*} where $C_7>0$ and $\epsilon$ is sufficiently small. Choosing  $0<\epsilon<\epsilon_1$ sufficiently small, provided
	\[C_7\epsilon^{\min\{N-2,2-2s\}}-C_5\epsilon^\frac{N-2}{2}<0.\] Therefore,
 taking $\lambda\in (0,\Lambda_3)$, where $\Lambda_3$ satisfies the following inequality
 \[{\Lambda_3}^\frac{2\theta}{2\theta-p}\left(\frac{2\theta-p}{2^*p 2\theta}\right)\frac{\left((2^*-p)S_{N,s}(\Omega)^\frac{-p}{2}\|f\|_{L^\frac{2^*}{2^*-2}(\Omega)}\right)^\frac{2\theta}{2\theta-p}}{((2^*-2\theta)b)^\frac{p}{2\theta-p}}<C_7\epsilon^{\min\{N-2,2-2s\}}-C_5\epsilon^\frac{N-2}{2},\] we have that $J_\lambda(u_0+ru_{\epsilon})<c_\lambda$, where $c_\lambda$ is defined in proposition  \ref{cpt}. This completes the proof. 
\end{proof}
\textit{Proof of Theorem \ref{t2}:} Assume $\Lambda_0$ and $\Lambda_3$ as it is in the Section \ref{sec4} and  Proposition \ref{neglevl} and   fix $\lambda<\Lambda_{00}:=\min\{\Lambda_0,\Lambda_3\}$.  Also as we have $u_0\in U_1$ and $u_0+l_0u_{\epsilon,\eta}\in U_2$,  we can define a continuous path connecting $U_1$ and $U_2$ as $ t\to\gamma(t):=u_0+tl_0u_{\epsilon,\eta}$. Therefore, there exists $t\in (0,1)$ such that $\gamma(t)\in \mathcal{N}^-_{\lambda}$ and consequently $m^-_{\lambda}=\inf_{u\in \mathcal{N}_\lambda^-}J_\lambda(u)\leq J_\lambda(\gamma(t))$. Furthermore, from Proposition \ref{neglevl} we get $J_\lambda^-<c_\lambda$ for $\lambda<\Lambda_{00}$. Again from Lemma \ref{pscseq} can ensure existence of a minimizing Palais–Smale sequence $\{u_k\}\subset\mathcal{N}^-_\lambda$ such that $J_\lambda(u_k)\to m^-_\lambda$. Since $J_\lambda^+<c_\lambda$, thus, from Proposition \ref{cpt} there exists $u_1\in \mathcal{X}^{1,2}(\Omega)$ such that $u_k\to u_1$ in $\mathcal{X}^{1,2}(\Omega)$. Therefore, $u_1$ is a minimizer of $J_\lambda$.  As from Lemma \ref{close} $\mathcal{N}^-_{\lambda}$ is closed we have that $u_1\in \mathcal{N}^-_{\lambda}$ for all $\lambda\in (0,\Lambda_{00})$. Since $\mathcal{N}_\lambda^+\cap\mathcal{N}^-_\lambda=\emptyset$, we can conclude that $u_0,u_1$ are distinct. This completes the proof of the Theorem \ref{t2}. Again following the arguments as in the proof of Theorem \ref{t1}, we can conclude that $u_1$ is a non-negative solution of the problem \eqref{p}. This ends the proof.

\begin{rem} 	{We believe that in case the sublinear term in the problem \eqref{p} get perturbed with  subcritical growth  then {using compact compact embedding} $\mathcal{X}^{1,2}(\Omega)\hookrightarrow L^r(\Omega)$ for every $r\in[1,2^*)$ we can establish the multiplicity result in $(0,\Lambda^*)$, where
		\[ \Lambda^*=\inf_{{\mathcal{X}^{1,2}(\Omega)\cap \int_{ \Omega} f |u|^p dx>0}}\frac{a(q-2)t(u)^{2-p}\rho(u)^2+b(q-2\theta) t(u)^{2\theta-p} \rho(u)^{2\theta}}{(q-p)\int_{ \Omega}f|u|^pdx},\] for $1<p<2<q<2^*, \theta\in [1,\frac{q}{2})$ and $ q\in (2\theta,2^*)$.
		Precisely, in this case  we do not required, the Proposition \ref{cpt}. Moreover, following the idea in \cite{fcaa,silva}, one can ensure the multiplicity when $\lambda\in (0,\Lambda^*+\epsilon)$, for some $\epsilon>0$. Note that,  when $\lambda\geq \Lambda^*$, the Nehari set $\mathcal{N}_\lambda$ is no longer a manifold.
		}
\end{rem}

\noindent \textbf{Acknowledgment}

The author would like to thank Dr. Pawan Kumar Mishra for  providing incisive feedback on the results and discussions. 

\noindent\textbf{Author declaration of interest}

The author declares that he has no known
competing financial interests or personal relationships that could have appeared to
influence the work reported in this paper.

\noindent\textbf{Data Availability Statement}

Data sharing not applicable to this article as no
datasets were generated or analysed during the current study.

\end{document}